\theoremstyle{plain}
\newtheorem{thm}{Theorem}
\newtheorem{cor}[thm]{Corollary}
\newtheorem{lem}[thm]{Lemma}
\newtheorem{prop}[thm]{Proposition}
\newtheorem{question}[thm]{Question}
\newtheorem{obs}[thm]{Observation}
\newtheorem{conj}[thm]{Conjecture}
\newtheorem*{rem}{Remark}
\theoremstyle{definition}
\newtheorem{defn}[thm]{Definition}
\newcommand{\leaky}[2]{Z_{(#1)}(#2)}
\newcommand{\psdl}[2]{Z_{(#1)}^{+}(#2)}
\newcommand{\psdlfort}{F_{(\ell)}^+}
\definecolor{backblue}{RGB}{230,245,245}
\definecolor{titleturq}{RGB}{54,89,89}
\def\hyperset{\pgfqkeys{/tikz/hyper}}
\tikzset{hypher/.code=\hyperset{#1}}
\newcommand*{\hyperhyper}[3]{
  \foreach \dimension in {#3,...,\the\numexpr#1+1\relax} {
    \edef\newNumber{\the\numexpr#2+\dimension\relax}
    \node[hyper/every hyper node/.try={\dimension}{\newNumber}, hyper/dimen \dimension node/.try, hyper/dimen \dimension\space style/.try] at ([hyper/dimen \dimension] #1-#2) (\dimension-\newNumber) {};
    \path (#1-#2) edge[hyper/every hyper edge/.try=\dimension, hyper/every hyper shift edge/.try=\dimension, hyper/dimen \dimension\space style/.try] (\dimension-\newNumber);
    \ifnum\newNumber>\dimension\relax
      \foreach \oldShift in \currentTransform {
        \if\relax\detokenize\expandafter{\oldShift}\relax\else
          \path (\dimension-\newNumber) edge[hyper/every hyper edge/.try=\dimension, hyper/every normal hyper edge/.try=\dimension, hyper/dimen \dimension\space style/.try] (\dimension-\the\numexpr\newNumber-\oldShift\relax);
        \fi
      }
    \fi
    \edef\currentTransform{\dimension,\currentTransform}%
    \ifnum\dimension<#3\relax
      \edef\temp{{\dimension}{\the\numexpr#2+\dimension\relax}{#3}}
      \expandafter\hyperhyper\temp
    \fi
  }
}
\tikzset{
  @only for the animation/.style={
    hyper/dimen #1 style/.style={opacity=0}}}
\title{Leaky Positive Semidefinite Forcing on Graphs}
\author{Olivia Elias \thanks{University of Colorado - Colorado Springs, Colorado Springs, CO, USA},
Ian Farish \thanks{California State Polytechnic University - Pomona, Pomona, CA, USA},
Emrys King \thanks{Pomona College, Claremont, CA, USA},
Josh Kyei \thanks{Morehouse College, Atlanta, GA, USA},
Ryan Moruzzi, Jr \thanks{Corresponding Author, Department of Mathematics, California State University East Bay, Hayward, CA, USA (ryan.moruzzi@csueastbay.edu)}}
\definecolor{cmured}{RGB}{183,1,48}
\definecolor{cmunavy}{RGB}{48,1,183}
\begin{document}

\maketitle
\begin{abstract}
      We introduce $\ell$-leaky positive semidefinite forcing and the $\ell$-leaky positive semidefinite number of a graph, $\psdl{\ell}{G}$, which combines the positive semidefinite color change rule with the addition of leaks to the graph. Furthermore, we determine general properties of $\psdl{\ell}{G}$ and $\psdl{\ell}{G}$ for various graphs, including path graphs, complete graphs, wheel graphs, complete bipartite graphs, trees, hypercubes, and prisms. We also define $\ell$-leaky positive semidefinite forts with the purpose of unveiling differences between $\ell$-leaky standard forcing and $\ell$-leaky positive semidefinite forcing.
\end{abstract}

\noindent\textbf{Keywords:} Zero forcing, positive semidefinite forcing, leaky forcing, forts, complete bipartite graphs, trees, Cartesian product.

\noindent{\bf AMS subject classification:} 05C50, 05C76.

\section{Introduction}

\quad \quad Zero forcing is an iterative vertex-coloring process on a graph $G$ that begins with a subset of vertices of $G$ colored blue and continues with the iterative application of a color change rule until no further forces can occur. In standard zero forcing, the color change rule is as follows: If a blue vertex has a unique non-blue neighbor, then the non-blue vertex can be ``forced" blue by the blue vertex. Forces are applied over the course of multiple rounds until there are no other forces that can occur. If an initial set of blue vertices is capable of forcing \textit{all} vertices in $G$ blue, then that initial subset of vertices is called a zero forcing set. The size, in cardinality, of a smallest zero forcing set of $G$ is the zero forcing number of $G$, denoted $Z(G)$.

Since 2008, stemming from its connection with the inverse eigenvalue problem of a graph \cite{AIMMINIMUMRANKWorkgroup}, zero forcing has grown into its own area of interest, in part due to its multitude of applications in mathematics, physics, and computer science. Zero forcing can be intuitively used to model the spread of information throughout a network, and theoretical results in this field have been useful in applications to quantum control \cite{QuantumControl} and search engines \cite{SearchTrees}.

With the growth of zero forcing on graphs as an area of research, variations of zero forcing have garnered much interest, particularly those with connection to the inverse eigenvalue problem of a graph (IEP-G), such as \textit{skew zero forcing} (see \cite{IEPGZF} and references therein). One variation with a connection to the IEP-G and the variation we focus on is \textit{positive semidefinite zero forcing}. In \cite{PSDOrigins}, positive semidefinite forcing was introduced, which essentially allows a single blue vertex to perform multiple forces within a single round; a formal definition of positive semidefinite forcing is provided in Section \ref{sec:Prelims}. As in standard zero forcing, a positive semidefinite forcing set is a set of initial blue vertices that is able to force an entire graph blue according to the positive semidefinite forcing color change rule. The positive semidefinite forcing number of a graph $G$, $Z_+(G)$, is the size of a smallest positive semidefinite forcing set of $G$. 

Motivated by the application of zero forcing to model the minimum number of sensors needed to observe an entire network, in \cite{DillmanKenter}, authors S. Dillman and F. Kenter asked the question ``What if there
is a leak in the network?'' where leaks hinder the observation process. Having defined leaks in a graph as vertices that cannot perform forces, the new set of rules for leaky forcing on graphs has generated recent interest as a variant of zero forcing \cite{LeakyReslience,D-2LeakyCube,GeneralizationsLeaky}. Although a connection between leaky forcing and the inverse eigenvalue problem of a graph has not been determined, introducing leaks to hinder the forcing process is an interesting question in its own right, particularly due to the interactions of leaks and other variations of zero forcing. In this paper, we introduce $\ell$-leaky positive semidefinite forcing, which combines the positive semidefinite color change ruleset with the presence of leaks. Like earlier variations, we define a $\ell$-leaky positive semidefinite forcing set as a set of initial blue vertices that can be used for an entire graph blue using the positive semidefinite color change rule regardless of the inclusion of $\ell$ leaks. Furthermore, the $\ell$ -leaky positive semidefinite forcing number of a graph $G$, $\psdl{\ell}{G}$, gives the size of a smallest $\ell$-leaky positive semidefinite forcing set of $G$. See Section \ref{sec:Prelims} for formal definitions of the previous parameters. Our determination of the $\ell$-leaky positive semidefinite forcing number being the same as the $\ell$-leaky standard number for various graphs led us to define $\ell$-leaky positive semidefinite forts in hopes of uncovering structural properties that relate $\ell$-leaky standard and $\ell$-leaky positive semidefinite forcing numbers. These $\ell$-leaky positive semidefinite forts are an interesting avenue for further research, particularly with the recent interest in forts, zero forcing, and computation \cite{Forts-Brimkov2019,FortsPSD,FortsOG,Forts-Optimal}.  

The paper is organized as follows. In Section \ref{sec:Prelims}, we provide the definitions used throughout, including the formal definition of $\ell$-leaky positive semidefinite forcing. In Section \ref{sec:PropertiesLeakyForcing}, we prove various properties of $\ell$-leaky positive semidefinite forcing. In Section \ref{sec:GraphFamilies}, we provide $\psdl{\ell}{G}$ for various graph families including paths, cycles, complete graphs, trees, wheels, and complete bipartite graphs. In Section \ref{sec:CartesisnProducts}, we investigate the $\ell$-leaky positive semidefinite forcing number of the Cartesian product of two graphs including hypercubes and prism graphs. In Section \ref{sec:Forts}, we define $\ell$-leaky positive semidefinite forts and establish their connection with $\ell$-leaky positive semidefinite forcing sets in hopes of structurally connecting those sets with $\ell$-leaky standard forcing sets.

\section{Preliminaries}\label{sec:Prelims}
\quad \quad Let $n\in \mathbb{Z}_+ = \{0,1,2,...\}$ and $G$ be a simple, undirected, finite graph with vertex set $V(G) = \{0, 1, 2, ... , n-1\}$ and edge set $E(G) = \{(i,j) : i\ne j,~i,j\in V(G)\}$. The neighborhood of $v\in V(G)$ is given by $N_G(v) = \{u \in V(G) : (u,v) \in E(G)\}$ and the degree of a vertex $v \in V(G)$ is given by deg$(v) = |\{u \in V(G) : u \in N_G(v)\}|$ while the minimum degree of the graph is given by $\delta(G) = \min\{\text{deg}(v) : v\in V(G)\}$. 
 
 A subgraph of $G$, $H\subseteq G$, is a graph such that $V(H) \subseteq V(G)$ and $E(H)\subseteq E(G)$. Furthermore, for any subset of vertices $S = \{v_1, v_2, ... , v_j\}\subseteq V(G)$,  the induced subgraph of $G$, denoted $H = G[S]$, is a subgraph of $G$ with edge set satisfying the following: for all $i,j\in V(H)$, $(i,j)\in E(H)$ if and only if $(i,j) \in E(G)$. For any subset of vertices $S\subseteq V(G)$, the graph $G-S$ is the induced subgraph formed with vertices $V(G)\setminus S$. Graphs of interest in sections that follow will be the path graph on $n$ vertices, $P_n$, the cycle graph on $n$ vertices, $C_n$, the complete graph on $n$ vertices, $K_n$, the wheel graph on n vertices, $W_n$, the complete bipartite graph on $n,m$ vertices, $K_{n,m}$, and tree graphs $T$ on $n$ vertices.

We study an iterative color change process known as positive semidefinite zero forcing which is a variation of standard forcing. The overall goal of this iterative color change process is to start with an initial set of vertices of the graph colored blue and apply the color change rule to force other vertices of the graph blue in hopes of turning the entire graph blue. The positive semidefinite forcing process is highlighted in Figure \ref{fig:PSDforcingprocess} and is accentuated by breaking up the graph $G$ into non-blue connected components determined by the initial blue vertex set. The precise definition of the positive semidefinite (psd) forcing process is as follows.  

\begin{defn}\label{def:PSDforce}
     Let $B^+\subseteq V(G)$ be a subset of vertices initially colored blue and $W_{1},\dots ,W_{k}$ be the sets of vertices of the components of $G-B^+$. Considering $G[W_{i} \cup B^+]$, if for $u \in B^+$ and $w_{i} \in W$, $w_{i}$ is the only non-blue neighbor of $u$, then $u \rightarrow w_i$ will denote a \emph{psd force} and $w_i$ is colored blue. 
\end{defn}

\begin{defn}
    Let $B^+\subseteq V(G)$ be a subset of vertices initially colored blue. $B^+$ is a \emph{positive semidefinite forcing set} if, after iteratively applying the positive semidefinite forcing process, the entire graph is colored blue. 
\end{defn}

\begin{figure}[h!]
    \centering
    \begin{tikzpicture}[x=0.75pt,y=0.75pt,yscale=-1.25,xscale=1.25]

\draw    (110.6,192.2) -- (140.2,192.42) ;
\draw    (85,101.4) -- (110.2,101.4) ;
\draw    (110.2,101.4) -- (139.8,101.62) ;
\draw    (139.8,101.62) -- (162.04,101.55) ;
\draw    (139.8,101.62) -- (139.98,79.74) ;
\draw    (140.27,123.68) -- (139.8,101.62) ;
\draw    (58.2,101.4) -- (85,101.4) ;
\draw  [color={rgb, 255:red, 0; green, 0; blue, 0 }  ,draw opacity=1 ][fill={rgb, 255:red, 255; green, 255; blue, 255 }  ,fill opacity=1 ] (53.68,101.4) .. controls (53.68,98.93) and (55.7,96.93) .. (58.2,96.93) .. controls (60.7,96.93) and (62.72,98.93) .. (62.72,101.4) .. controls (62.72,103.87) and (60.7,105.87) .. (58.2,105.87) .. controls (55.7,105.87) and (53.68,103.87) .. (53.68,101.4) -- cycle ;
\draw  [fill={rgb, 255:red, 255; green, 255; blue, 255 }  ,fill opacity=1 ] (135.46,79.74) .. controls (135.46,77.27) and (137.48,75.27) .. (139.98,75.27) .. controls (142.48,75.27) and (144.5,77.27) .. (144.5,79.74) .. controls (144.5,82.21) and (142.48,84.21) .. (139.98,84.21) .. controls (137.48,84.21) and (135.46,82.21) .. (135.46,79.74) -- cycle ;
\draw  [fill={rgb, 255:red, 255; green, 255; blue, 255 }  ,fill opacity=1 ] (105.68,101.4) .. controls (105.68,98.93) and (107.7,96.93) .. (110.2,96.93) .. controls (112.7,96.93) and (114.72,98.93) .. (114.72,101.4) .. controls (114.72,103.87) and (112.7,105.87) .. (110.2,105.87) .. controls (107.7,105.87) and (105.68,103.87) .. (105.68,101.4) -- cycle ;
\draw  [color={rgb, 255:red, 0; green, 0; blue, 0 }  ,draw opacity=1 ][fill={rgb, 255:red, 255; green, 255; blue, 255 }  ,fill opacity=1 ] (157.52,101.55) .. controls (157.52,99.08) and (159.55,97.07) .. (162.04,97.07) .. controls (164.54,97.07) and (166.57,99.08) .. (166.57,101.55) .. controls (166.57,104.01) and (164.54,106.02) .. (162.04,106.02) .. controls (159.55,106.02) and (157.52,104.01) .. (157.52,101.55) -- cycle ;
\draw  [fill={rgb, 255:red, 255; green, 255; blue, 255 }  ,fill opacity=1 ] (135.75,123.68) .. controls (135.75,121.21) and (137.78,119.21) .. (140.27,119.21) .. controls (142.77,119.21) and (144.8,121.21) .. (144.8,123.68) .. controls (144.8,126.15) and (142.77,128.15) .. (140.27,128.15) .. controls (137.78,128.15) and (135.75,126.15) .. (135.75,123.68) -- cycle ;
\draw  [fill={rgb, 255:red, 255; green, 255; blue, 255 }  ,fill opacity=1 ] (80.48,101.4) .. controls (80.48,98.93) and (82.5,96.93) .. (85,96.93) .. controls (87.5,96.93) and (89.52,98.93) .. (89.52,101.4) .. controls (89.52,103.87) and (87.5,105.87) .. (85,105.87) .. controls (82.5,105.87) and (80.48,103.87) .. (80.48,101.4) -- cycle ;
\draw  [fill={rgb, 255:red, 0; green, 7; blue, 255 }  ,fill opacity=1 ] (135.27,101.62) .. controls (135.27,99.15) and (137.3,97.15) .. (139.8,97.15) .. controls (142.29,97.15) and (144.32,99.15) .. (144.32,101.62) .. controls (144.32,104.09) and (142.29,106.09) .. (139.8,106.09) .. controls (137.3,106.09) and (135.27,104.09) .. (135.27,101.62) -- cycle ;
\draw    (189.2,100.8) -- (223.8,100.99) ;
\draw [shift={(225.8,101)}, rotate = 180.31] [color={rgb, 255:red, 0; green, 0; blue, 0 }  ][line width=0.75]    (10.93,-3.29) .. controls (6.95,-1.4) and (3.31,-0.3) .. (0,0) .. controls (3.31,0.3) and (6.95,1.4) .. (10.93,3.29)   ;
\draw    (276.2,101) -- (301.4,101) ;
\draw    (249.4,101) -- (276.2,101) ;
\draw  [color={rgb, 255:red, 0; green, 0; blue, 0 }  ,draw opacity=1 ][fill={rgb, 255:red, 255; green, 255; blue, 255 }  ,fill opacity=1 ] (244.88,101) .. controls (244.88,98.53) and (246.9,96.53) .. (249.4,96.53) .. controls (251.9,96.53) and (253.92,98.53) .. (253.92,101) .. controls (253.92,103.47) and (251.9,105.47) .. (249.4,105.47) .. controls (246.9,105.47) and (244.88,103.47) .. (244.88,101) -- cycle ;
\draw  [fill={rgb, 255:red, 255; green, 255; blue, 255 }  ,fill opacity=1 ] (326.66,79.34) .. controls (326.66,76.87) and (328.68,74.87) .. (331.18,74.87) .. controls (333.68,74.87) and (335.7,76.87) .. (335.7,79.34) .. controls (335.7,81.81) and (333.68,83.81) .. (331.18,83.81) .. controls (328.68,83.81) and (326.66,81.81) .. (326.66,79.34) -- cycle ;
\draw  [fill={rgb, 255:red, 255; green, 255; blue, 255 }  ,fill opacity=1 ] (296.88,101) .. controls (296.88,98.53) and (298.9,96.53) .. (301.4,96.53) .. controls (303.9,96.53) and (305.92,98.53) .. (305.92,101) .. controls (305.92,103.47) and (303.9,105.47) .. (301.4,105.47) .. controls (298.9,105.47) and (296.88,103.47) .. (296.88,101) -- cycle ;
\draw  [color={rgb, 255:red, 0; green, 0; blue, 0 }  ,draw opacity=1 ][fill={rgb, 255:red, 255; green, 255; blue, 255 }  ,fill opacity=1 ] (348.72,101.15) .. controls (348.72,98.68) and (350.75,96.67) .. (353.24,96.67) .. controls (355.74,96.67) and (357.77,98.68) .. (357.77,101.15) .. controls (357.77,103.61) and (355.74,105.62) .. (353.24,105.62) .. controls (350.75,105.62) and (348.72,103.61) .. (348.72,101.15) -- cycle ;
\draw  [fill={rgb, 255:red, 255; green, 255; blue, 255 }  ,fill opacity=1 ] (326.95,123.28) .. controls (326.95,120.81) and (328.98,118.81) .. (331.47,118.81) .. controls (333.97,118.81) and (336,120.81) .. (336,123.28) .. controls (336,125.75) and (333.97,127.75) .. (331.47,127.75) .. controls (328.98,127.75) and (326.95,125.75) .. (326.95,123.28) -- cycle ;
\draw  [fill={rgb, 255:red, 255; green, 255; blue, 255 }  ,fill opacity=1 ] (271.68,101) .. controls (271.68,98.53) and (273.7,96.53) .. (276.2,96.53) .. controls (278.7,96.53) and (280.72,98.53) .. (280.72,101) .. controls (280.72,103.47) and (278.7,105.47) .. (276.2,105.47) .. controls (273.7,105.47) and (271.68,103.47) .. (271.68,101) -- cycle ;
\draw  [fill={rgb, 255:red, 0; green, 7; blue, 255 }  ,fill opacity=1 ] (326.47,101.22) .. controls (326.47,98.75) and (328.5,96.75) .. (331,96.75) .. controls (333.49,96.75) and (335.52,98.75) .. (335.52,101.22) .. controls (335.52,103.69) and (333.49,105.69) .. (331,105.69) .. controls (328.5,105.69) and (326.47,103.69) .. (326.47,101.22) -- cycle ;
\draw   (241.2,101) .. controls (241.2,89.95) and (256.87,81) .. (276.2,81) .. controls (295.53,81) and (311.2,89.95) .. (311.2,101) .. controls (311.2,112.05) and (295.53,121) .. (276.2,121) .. controls (256.87,121) and (241.2,112.05) .. (241.2,101) -- cycle ;
\draw   (318.68,79.34) .. controls (318.68,73.37) and (324.28,68.54) .. (331.18,68.54) .. controls (338.08,68.54) and (343.68,73.37) .. (343.68,79.34) .. controls (343.68,85.3) and (338.08,90.14) .. (331.18,90.14) .. controls (324.28,90.14) and (318.68,85.3) .. (318.68,79.34) -- cycle ;
\draw   (340.74,101.15) .. controls (340.74,95.18) and (346.34,90.35) .. (353.24,90.35) .. controls (360.15,90.35) and (365.74,95.18) .. (365.74,101.15) .. controls (365.74,107.11) and (360.15,111.95) .. (353.24,111.95) .. controls (346.34,111.95) and (340.74,107.11) .. (340.74,101.15) -- cycle ;
\draw   (318.97,123.28) .. controls (318.97,117.32) and (324.57,112.48) .. (331.47,112.48) .. controls (338.38,112.48) and (343.97,117.32) .. (343.97,123.28) .. controls (343.97,129.25) and (338.38,134.08) .. (331.47,134.08) .. controls (324.57,134.08) and (318.97,129.25) .. (318.97,123.28) -- cycle ;
\draw    (248.2,125.8) -- (181.82,173.83) ;
\draw [shift={(180.2,175)}, rotate = 324.11] [color={rgb, 255:red, 0; green, 0; blue, 0 }  ][line width=0.75]    (10.93,-3.29) .. controls (6.95,-1.4) and (3.31,-0.3) .. (0,0) .. controls (3.31,0.3) and (6.95,1.4) .. (10.93,3.29)   ;
\draw    (85.4,192.2) -- (110.6,192.2) ;
\draw    (58.6,192.2) -- (85.4,192.2) ;
\draw  [fill={rgb, 255:red, 255; green, 255; blue, 255 }  ,fill opacity=1 ] (135.86,170.54) .. controls (135.86,168.07) and (137.88,166.07) .. (140.38,166.07) .. controls (142.88,166.07) and (144.9,168.07) .. (144.9,170.54) .. controls (144.9,173.01) and (142.88,175.01) .. (140.38,175.01) .. controls (137.88,175.01) and (135.86,173.01) .. (135.86,170.54) -- cycle ;
\draw  [fill={rgb, 255:red, 255; green, 255; blue, 255 }  ,fill opacity=1 ] (106.08,192.2) .. controls (106.08,189.73) and (108.1,187.73) .. (110.6,187.73) .. controls (113.1,187.73) and (115.12,189.73) .. (115.12,192.2) .. controls (115.12,194.67) and (113.1,196.67) .. (110.6,196.67) .. controls (108.1,196.67) and (106.08,194.67) .. (106.08,192.2) -- cycle ;
\draw  [color={rgb, 255:red, 0; green, 0; blue, 0 }  ,draw opacity=1 ][fill={rgb, 255:red, 255; green, 255; blue, 255 }  ,fill opacity=1 ] (157.92,192.35) .. controls (157.92,189.88) and (159.95,187.87) .. (162.44,187.87) .. controls (164.94,187.87) and (166.97,189.88) .. (166.97,192.35) .. controls (166.97,194.81) and (164.94,196.82) .. (162.44,196.82) .. controls (159.95,196.82) and (157.92,194.81) .. (157.92,192.35) -- cycle ;
\draw  [fill={rgb, 255:red, 255; green, 255; blue, 255 }  ,fill opacity=1 ] (136.15,214.48) .. controls (136.15,212.01) and (138.18,210.01) .. (140.67,210.01) .. controls (143.17,210.01) and (145.2,212.01) .. (145.2,214.48) .. controls (145.2,216.95) and (143.17,218.95) .. (140.67,218.95) .. controls (138.18,218.95) and (136.15,216.95) .. (136.15,214.48) -- cycle ;
\draw  [fill={rgb, 255:red, 255; green, 255; blue, 255 }  ,fill opacity=1 ] (80.88,192.2) .. controls (80.88,189.73) and (82.9,187.73) .. (85.4,187.73) .. controls (87.9,187.73) and (89.92,189.73) .. (89.92,192.2) .. controls (89.92,194.67) and (87.9,196.67) .. (85.4,196.67) .. controls (82.9,196.67) and (80.88,194.67) .. (80.88,192.2) -- cycle ;
\draw  [fill={rgb, 255:red, 0; green, 7; blue, 255 }  ,fill opacity=1 ] (135.67,192.42) .. controls (135.67,189.95) and (137.7,187.95) .. (140.2,187.95) .. controls (142.69,187.95) and (144.72,189.95) .. (144.72,192.42) .. controls (144.72,194.89) and (142.69,196.89) .. (140.2,196.89) .. controls (137.7,196.89) and (135.67,194.89) .. (135.67,192.42) -- cycle ;
\draw   (50.4,192.2) .. controls (50.4,181.15) and (66.07,172.2) .. (85.4,172.2) .. controls (104.73,172.2) and (120.4,181.15) .. (120.4,192.2) .. controls (120.4,203.25) and (104.73,212.2) .. (85.4,212.2) .. controls (66.07,212.2) and (50.4,203.25) .. (50.4,192.2) -- cycle ;
\draw  [color={rgb, 255:red, 0; green, 0; blue, 0 }  ,draw opacity=1 ][fill={rgb, 255:red, 255; green, 255; blue, 255 }  ,fill opacity=1 ] (54.08,192.2) .. controls (54.08,189.73) and (56.1,187.73) .. (58.6,187.73) .. controls (61.1,187.73) and (63.12,189.73) .. (63.12,192.2) .. controls (63.12,194.67) and (61.1,196.67) .. (58.6,196.67) .. controls (56.1,196.67) and (54.08,194.67) .. (54.08,192.2) -- cycle ;
\draw    (281,199.4) -- (306.2,199.4) ;
\draw    (306.2,199.4) -- (335.8,199.62) ;
\draw    (335.8,199.62) -- (358.04,199.55) ;
\draw    (335.8,199.62) -- (335.98,177.74) ;
\draw    (336.27,221.68) -- (335.8,199.62) ;
\draw    (254.2,199.4) -- (281,199.4) ;
\draw  [color={rgb, 255:red, 0; green, 0; blue, 0 }  ,draw opacity=1 ][fill={rgb, 255:red, 255; green, 255; blue, 255 }  ,fill opacity=1 ] (249.68,199.4) .. controls (249.68,196.93) and (251.7,194.93) .. (254.2,194.93) .. controls (256.7,194.93) and (258.72,196.93) .. (258.72,199.4) .. controls (258.72,201.87) and (256.7,203.87) .. (254.2,203.87) .. controls (251.7,203.87) and (249.68,201.87) .. (249.68,199.4) -- cycle ;
\draw  [fill={rgb, 255:red, 255; green, 255; blue, 255 }  ,fill opacity=1 ] (331.46,177.74) .. controls (331.46,175.27) and (333.48,173.27) .. (335.98,173.27) .. controls (338.48,173.27) and (340.5,175.27) .. (340.5,177.74) .. controls (340.5,180.21) and (338.48,182.21) .. (335.98,182.21) .. controls (333.48,182.21) and (331.46,180.21) .. (331.46,177.74) -- cycle ;
\draw  [fill={rgb, 255:red, 0; green, 7; blue, 255 }  ,fill opacity=1 ] (301.68,199.4) .. controls (301.68,196.93) and (303.7,194.93) .. (306.2,194.93) .. controls (308.7,194.93) and (310.72,196.93) .. (310.72,199.4) .. controls (310.72,201.87) and (308.7,203.87) .. (306.2,203.87) .. controls (303.7,203.87) and (301.68,201.87) .. (301.68,199.4) -- cycle ;
\draw  [color={rgb, 255:red, 0; green, 0; blue, 0 }  ,draw opacity=1 ][fill={rgb, 255:red, 255; green, 255; blue, 255 }  ,fill opacity=1 ] (353.52,199.55) .. controls (353.52,197.08) and (355.55,195.07) .. (358.04,195.07) .. controls (360.54,195.07) and (362.57,197.08) .. (362.57,199.55) .. controls (362.57,202.01) and (360.54,204.02) .. (358.04,204.02) .. controls (355.55,204.02) and (353.52,202.01) .. (353.52,199.55) -- cycle ;
\draw  [fill={rgb, 255:red, 255; green, 255; blue, 255 }  ,fill opacity=1 ] (331.75,221.68) .. controls (331.75,219.21) and (333.78,217.21) .. (336.27,217.21) .. controls (338.77,217.21) and (340.8,219.21) .. (340.8,221.68) .. controls (340.8,224.15) and (338.77,226.15) .. (336.27,226.15) .. controls (333.78,226.15) and (331.75,224.15) .. (331.75,221.68) -- cycle ;
\draw  [fill={rgb, 255:red, 255; green, 255; blue, 255 }  ,fill opacity=1 ] (276.48,199.4) .. controls (276.48,196.93) and (278.5,194.93) .. (281,194.93) .. controls (283.5,194.93) and (285.52,196.93) .. (285.52,199.4) .. controls (285.52,201.87) and (283.5,203.87) .. (281,203.87) .. controls (278.5,203.87) and (276.48,201.87) .. (276.48,199.4) -- cycle ;
\draw  [fill={rgb, 255:red, 0; green, 7; blue, 255 }  ,fill opacity=1 ] (331.27,199.62) .. controls (331.27,197.15) and (333.3,195.15) .. (335.8,195.15) .. controls (338.29,195.15) and (340.32,197.15) .. (340.32,199.62) .. controls (340.32,202.09) and (338.29,204.09) .. (335.8,204.09) .. controls (333.3,204.09) and (331.27,202.09) .. (331.27,199.62) -- cycle ;
\draw    (190,194.4) -- (224.6,194.59) ;
\draw [shift={(226.6,194.6)}, rotate = 180.31] [color={rgb, 255:red, 0; green, 0; blue, 0 }  ][line width=0.75]    (10.93,-3.29) .. controls (6.95,-1.4) and (3.31,-0.3) .. (0,0) .. controls (3.31,0.3) and (6.95,1.4) .. (10.93,3.29)   ;

\draw (239.2,65.6) node [anchor=north west][inner sep=0.75pt]  [font=\small]  {$W_{1}$};
\draw (320.8,52) node [anchor=north west][inner sep=0.75pt]  [font=\small]  {$W_{2}$};
\draw (368.4,94) node [anchor=north west][inner sep=0.75pt]  [font=\small]  {$W_{3}$};
\draw (320.8,138.8) node [anchor=north west][inner sep=0.75pt]  [font=\small]  {$W_{4}$};
\draw (72.4,226.4) node [anchor=north west][inner sep=0.75pt]  [font=\small]  {$G[ W_{1} \cup B]$};

\end{tikzpicture}

    \caption{An example of one step of the positive semidefinite forcing process}
    \label{fig:PSDforcingprocess}
\end{figure}
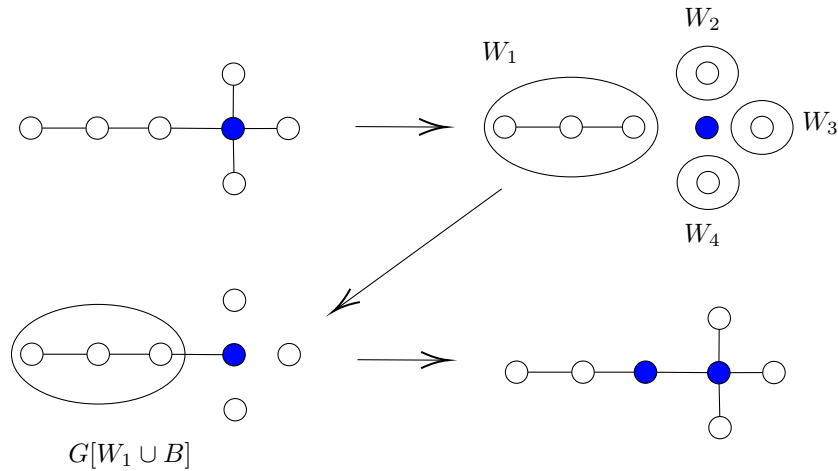

An example of a first step of the positive semidefinite forcing process can be seen in  Figure \ref{fig:PSDforcingprocess}. We note that if there is only one connected component of $G-B^+$, then the positive semidefinite forcing color change rule is the same as standard forcing (this is highlighted in Section \ref{sec:Forts}). Of course, one can color all vertices or $n-1$ vertices of a graph blue and force the entire graph with this forcing process. Thus, the interest lies in determining the fewest number of vertices needed to turn the entire graph blue.

\begin{defn}
   The \emph{positive semidefinite forcing number} of a graph $G$, denoted by $Z^+(G)$, is given by
   $$Z_+(G) = \min \{ |B^+| : B^+\subseteq V(G) \text{ is a positive semidefinite forcing set of } G\}.$$
\end{defn}

 Though positive semidefinite forcing has been well studied, we are interested in the positive semidefinite forcing process when there are leaks present in our graph. The study of the effect of leaks on the zero forcing process on graphs was first introduced in \cite{DillmanKenter}, and has garnered much recent interest \cite{LeakyReslience,GeneralizationsLeaky,D-2LeakyCube}. The following definition of leaks in a graph was introduced in \cite{DillmanKenter}. 

\begin{defn}
    For any graph $G$, a \emph{leak} in $G$ is any vertex $v\in V(G)$ that loses its ability to force. 
\end{defn}

Due to the positive semidefinite forcing process, a leak $v\in V(G)$ in a graph may be visualized by imagining an additional vertex $u$ adjacent only to $v$ which hinders the forcing process since the vertex $v$ would have an additional non-blue vertex. 


With the addition of leaks in a graph, we now define a $\ell$-leaky positive semidefinite forcing set and the $\ell$-leaky positive semidefinite forcing number of a graph, which includes the possibility of having no leaks. 

\begin{defn}
    Let $B_{(\ell)}^+\subseteq V(G)$ be a subset of vertices initial colored blue. After choosing this set of initially colored vertices, choose $\ell$ vertices to be leaks in $G$. Then, $B^+_{(\ell)}$ is an \emph{$\ell$-leaky positive semidefinite forcing set} if it can still force $G$ following the positive semidefinite color change rule, despite the presence of $\ell$ leaks, wherever the leaks are placed in $G$.
\end{defn}

\begin{defn}
    For any graph $G$ and $\ell\in \mathbb{Z}_{ \geq 0}$, the \emph{$\ell$-leaky positive semidefinite forcing number}, denoted $Z^+_{(\ell)}(G)$, is given by
    $$Z^+_{(\ell)}(G) = \min \{ |B_{(\ell)}^+| : B_{(\ell)}^+\subseteq V(G) \text{ is an } \ell \text{-leaky positive semidefinite forcing set}\}.$$
\end{defn}

When $\ell = 0$, notice $\psdl{0}{G} = Z_+(G)$. We note that the notation used for $\ell$-leaky positive semidefinite forcing shifts the $+$ to a superscript because of the incorporation of $(\ell)$ in the subscript. See Figure \ref{fig:PSDLeakyExample} for an example of a graph and $\ell$-leaky sets for various $\ell$ which highlight the difference between forcing sets when adding leaks to a graph $G$.

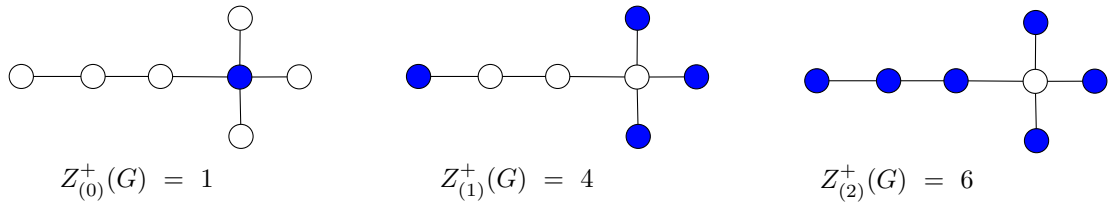
\begin{figure}[h!]
    \centering
    \begin{tikzpicture}[x=0.75pt,y=0.75pt,yscale=-1.35,xscale=1.35]

\draw    (215,184.8) -- (240.2,184.8) ;
\draw    (240.2,184.8) -- (269.8,185.02) ;
\draw    (269.8,185.02) -- (292.04,184.95) ;
\draw    (269.8,185.02) -- (269.98,163.14) ;
\draw    (270.27,207.08) -- (269.8,185.02) ;
\draw    (188.2,184.8) -- (215,184.8) ;
\draw  [color={rgb, 255:red, 0; green, 0; blue, 0 }  ,draw opacity=1 ][fill={rgb, 255:red, 0; green, 7; blue, 255 }  ,fill opacity=1 ] (183.68,184.8) .. controls (183.68,182.33) and (185.7,180.33) .. (188.2,180.33) .. controls (190.7,180.33) and (192.72,182.33) .. (192.72,184.8) .. controls (192.72,187.27) and (190.7,189.27) .. (188.2,189.27) .. controls (185.7,189.27) and (183.68,187.27) .. (183.68,184.8) -- cycle ;
\draw  [fill={rgb, 255:red, 0; green, 7; blue, 255 }  ,fill opacity=1 ] (265.46,163.14) .. controls (265.46,160.67) and (267.48,158.67) .. (269.98,158.67) .. controls (272.48,158.67) and (274.5,160.67) .. (274.5,163.14) .. controls (274.5,165.61) and (272.48,167.61) .. (269.98,167.61) .. controls (267.48,167.61) and (265.46,165.61) .. (265.46,163.14) -- cycle ;
\draw  [fill={rgb, 255:red, 255; green, 255; blue, 255 }  ,fill opacity=1 ] (235.68,184.8) .. controls (235.68,182.33) and (237.7,180.33) .. (240.2,180.33) .. controls (242.7,180.33) and (244.72,182.33) .. (244.72,184.8) .. controls (244.72,187.27) and (242.7,189.27) .. (240.2,189.27) .. controls (237.7,189.27) and (235.68,187.27) .. (235.68,184.8) -- cycle ;
\draw  [color={rgb, 255:red, 0; green, 0; blue, 0 }  ,draw opacity=1 ][fill={rgb, 255:red, 0; green, 7; blue, 255 }  ,fill opacity=1 ] (287.52,184.95) .. controls (287.52,182.48) and (289.55,180.47) .. (292.04,180.47) .. controls (294.54,180.47) and (296.57,182.48) .. (296.57,184.95) .. controls (296.57,187.41) and (294.54,189.42) .. (292.04,189.42) .. controls (289.55,189.42) and (287.52,187.41) .. (287.52,184.95) -- cycle ;
\draw  [fill={rgb, 255:red, 0; green, 7; blue, 255 }  ,fill opacity=1 ] (265.75,207.08) .. controls (265.75,204.61) and (267.78,202.61) .. (270.27,202.61) .. controls (272.77,202.61) and (274.8,204.61) .. (274.8,207.08) .. controls (274.8,209.55) and (272.77,211.55) .. (270.27,211.55) .. controls (267.78,211.55) and (265.75,209.55) .. (265.75,207.08) -- cycle ;
\draw  [fill={rgb, 255:red, 255; green, 255; blue, 255 }  ,fill opacity=1 ] (210.48,184.8) .. controls (210.48,182.33) and (212.5,180.33) .. (215,180.33) .. controls (217.5,180.33) and (219.52,182.33) .. (219.52,184.8) .. controls (219.52,187.27) and (217.5,189.27) .. (215,189.27) .. controls (212.5,189.27) and (210.48,187.27) .. (210.48,184.8) -- cycle ;
\draw  [fill={rgb, 255:red, 255; green, 255; blue, 255 }  ,fill opacity=1 ] (265.27,185.02) .. controls (265.27,182.55) and (267.3,180.55) .. (269.8,180.55) .. controls (272.29,180.55) and (274.32,182.55) .. (274.32,185.02) .. controls (274.32,187.49) and (272.29,189.49) .. (269.8,189.49) .. controls (267.3,189.49) and (265.27,187.49) .. (265.27,185.02) -- cycle ;
\draw    (66.6,184.8) -- (91.8,184.8) ;
\draw    (91.8,184.8) -- (121.4,185.02) ;
\draw    (121.4,185.02) -- (143.64,184.95) ;
\draw    (121.4,185.02) -- (121.58,163.14) ;
\draw    (121.87,207.08) -- (121.4,185.02) ;
\draw    (39.8,184.8) -- (66.6,184.8) ;
\draw  [color={rgb, 255:red, 0; green, 0; blue, 0 }  ,draw opacity=1 ][fill={rgb, 255:red, 255; green, 255; blue, 255 }  ,fill opacity=1 ] (35.28,184.8) .. controls (35.28,182.33) and (37.3,180.33) .. (39.8,180.33) .. controls (42.3,180.33) and (44.32,182.33) .. (44.32,184.8) .. controls (44.32,187.27) and (42.3,189.27) .. (39.8,189.27) .. controls (37.3,189.27) and (35.28,187.27) .. (35.28,184.8) -- cycle ;
\draw  [fill={rgb, 255:red, 255; green, 255; blue, 255 }  ,fill opacity=1 ] (117.06,163.14) .. controls (117.06,160.67) and (119.08,158.67) .. (121.58,158.67) .. controls (124.08,158.67) and (126.1,160.67) .. (126.1,163.14) .. controls (126.1,165.61) and (124.08,167.61) .. (121.58,167.61) .. controls (119.08,167.61) and (117.06,165.61) .. (117.06,163.14) -- cycle ;
\draw  [fill={rgb, 255:red, 255; green, 255; blue, 255 }  ,fill opacity=1 ] (87.28,184.8) .. controls (87.28,182.33) and (89.3,180.33) .. (91.8,180.33) .. controls (94.3,180.33) and (96.32,182.33) .. (96.32,184.8) .. controls (96.32,187.27) and (94.3,189.27) .. (91.8,189.27) .. controls (89.3,189.27) and (87.28,187.27) .. (87.28,184.8) -- cycle ;
\draw  [color={rgb, 255:red, 0; green, 0; blue, 0 }  ,draw opacity=1 ][fill={rgb, 255:red, 255; green, 255; blue, 255 }  ,fill opacity=1 ] (139.12,184.95) .. controls (139.12,182.48) and (141.15,180.47) .. (143.64,180.47) .. controls (146.14,180.47) and (148.17,182.48) .. (148.17,184.95) .. controls (148.17,187.41) and (146.14,189.42) .. (143.64,189.42) .. controls (141.15,189.42) and (139.12,187.41) .. (139.12,184.95) -- cycle ;
\draw  [fill={rgb, 255:red, 255; green, 255; blue, 255 }  ,fill opacity=1 ] (117.35,207.08) .. controls (117.35,204.61) and (119.38,202.61) .. (121.87,202.61) .. controls (124.37,202.61) and (126.4,204.61) .. (126.4,207.08) .. controls (126.4,209.55) and (124.37,211.55) .. (121.87,211.55) .. controls (119.38,211.55) and (117.35,209.55) .. (117.35,207.08) -- cycle ;
\draw  [fill={rgb, 255:red, 255; green, 255; blue, 255 }  ,fill opacity=1 ] (62.08,184.8) .. controls (62.08,182.33) and (64.1,180.33) .. (66.6,180.33) .. controls (69.1,180.33) and (71.12,182.33) .. (71.12,184.8) .. controls (71.12,187.27) and (69.1,189.27) .. (66.6,189.27) .. controls (64.1,189.27) and (62.08,187.27) .. (62.08,184.8) -- cycle ;
\draw  [fill={rgb, 255:red, 0; green, 7; blue, 255 }  ,fill opacity=1 ] (116.87,185.02) .. controls (116.87,182.55) and (118.9,180.55) .. (121.4,180.55) .. controls (123.89,180.55) and (125.92,182.55) .. (125.92,185.02) .. controls (125.92,187.49) and (123.89,189.49) .. (121.4,189.49) .. controls (118.9,189.49) and (116.87,187.49) .. (116.87,185.02) -- cycle ;
\draw    (363.8,186.4) -- (389,186.4) ;
\draw    (389,186.4) -- (418.6,186.62) ;
\draw    (418.6,186.62) -- (440.84,186.55) ;
\draw    (418.6,186.62) -- (418.78,164.74) ;
\draw    (419.07,208.68) -- (418.6,186.62) ;
\draw    (337,186.4) -- (363.8,186.4) ;
\draw  [color={rgb, 255:red, 0; green, 0; blue, 0 }  ,draw opacity=1 ][fill={rgb, 255:red, 0; green, 7; blue, 255 }  ,fill opacity=1 ] (332.48,186.4) .. controls (332.48,183.93) and (334.5,181.93) .. (337,181.93) .. controls (339.5,181.93) and (341.52,183.93) .. (341.52,186.4) .. controls (341.52,188.87) and (339.5,190.87) .. (337,190.87) .. controls (334.5,190.87) and (332.48,188.87) .. (332.48,186.4) -- cycle ;
\draw  [fill={rgb, 255:red, 0; green, 7; blue, 255 }  ,fill opacity=1 ] (414.26,164.74) .. controls (414.26,162.27) and (416.28,160.27) .. (418.78,160.27) .. controls (421.28,160.27) and (423.3,162.27) .. (423.3,164.74) .. controls (423.3,167.21) and (421.28,169.21) .. (418.78,169.21) .. controls (416.28,169.21) and (414.26,167.21) .. (414.26,164.74) -- cycle ;
\draw  [fill={rgb, 255:red, 0; green, 7; blue, 255 }  ,fill opacity=1 ] (384.48,186.4) .. controls (384.48,183.93) and (386.5,181.93) .. (389,181.93) .. controls (391.5,181.93) and (393.52,183.93) .. (393.52,186.4) .. controls (393.52,188.87) and (391.5,190.87) .. (389,190.87) .. controls (386.5,190.87) and (384.48,188.87) .. (384.48,186.4) -- cycle ;
\draw  [color={rgb, 255:red, 0; green, 0; blue, 0 }  ,draw opacity=1 ][fill={rgb, 255:red, 0; green, 7; blue, 255 }  ,fill opacity=1 ] (436.32,186.55) .. controls (436.32,184.08) and (438.35,182.07) .. (440.84,182.07) .. controls (443.34,182.07) and (445.37,184.08) .. (445.37,186.55) .. controls (445.37,189.01) and (443.34,191.02) .. (440.84,191.02) .. controls (438.35,191.02) and (436.32,189.01) .. (436.32,186.55) -- cycle ;
\draw  [fill={rgb, 255:red, 0; green, 7; blue, 255 }  ,fill opacity=1 ] (414.55,208.68) .. controls (414.55,206.21) and (416.58,204.21) .. (419.07,204.21) .. controls (421.57,204.21) and (423.6,206.21) .. (423.6,208.68) .. controls (423.6,211.15) and (421.57,213.15) .. (419.07,213.15) .. controls (416.58,213.15) and (414.55,211.15) .. (414.55,208.68) -- cycle ;
\draw  [fill={rgb, 255:red, 0; green, 7; blue, 255 }  ,fill opacity=1 ] (359.28,186.4) .. controls (359.28,183.93) and (361.3,181.93) .. (363.8,181.93) .. controls (366.3,181.93) and (368.32,183.93) .. (368.32,186.4) .. controls (368.32,188.87) and (366.3,190.87) .. (363.8,190.87) .. controls (361.3,190.87) and (359.28,188.87) .. (359.28,186.4) -- cycle ;
\draw  [fill={rgb, 255:red, 255; green, 255; blue, 255 }  ,fill opacity=1 ] (414.07,186.62) .. controls (414.07,184.15) and (416.1,182.15) .. (418.6,182.15) .. controls (421.09,182.15) and (423.12,184.15) .. (423.12,186.62) .. controls (423.12,189.09) and (421.09,191.09) .. (418.6,191.09) .. controls (416.1,191.09) and (414.07,189.09) .. (414.07,186.62) -- cycle ;

\draw (53.2,216.4) node [anchor=north west][inner sep=0.75pt]  [font=\small]  {$Z_{( 0)}^{+}( G) \ =\ 1$};
\draw (195.2,216.4) node [anchor=north west][inner sep=0.75pt]  [font=\small]  {$Z_{( 1)}^{+}( G) \ =\ 4$};
\draw (337.2,216.8) node [anchor=north west][inner sep=0.75pt]  [font=\small]  {$Z_{( 2)}^{+}( G) \ =\ 6$};

\end{tikzpicture}

    \caption{Examples of a graph and $\ell$-leaky positive semidefinite forcing sets for each.}
    \label{fig:PSDLeakyExample}
\end{figure}

\section{Properties of leaky positive semidefinite forcing}\label{sec:PropertiesLeakyForcing}

\quad \quad In this section, crucial theorems about positive semidefinite leaky forcing are established, which are useful in many results that follow in subsequent sections. As noted previously, for any graph $G$ and initial set of vertices colored blue $B^+\subseteq V(G)$, if $G-B^+$ is a single component, the positive semidefinite color change rule is the same as the standard zero forcing rule. Also, any valid $\ell$-leaky standard force on a graph is a valid $\ell$-leaky positive semidefinite force on the same graph. The following proposition, originally stated for $\ell=0$ in \cite{PSDOrigins}, follows from this sentiment.

\begin{prop}
\label{prop:psdlleqleaky}
    For any graph $G$ of order $n$ with $\ell$ leaks, $$\psdl{\ell}{G}\leq Z_{(\ell)}(G).$$
\end{prop}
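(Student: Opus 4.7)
The plan is to prove the inequality by showing that every $\ell$-leaky standard forcing set is automatically an $\ell$-leaky positive semidefinite forcing set; taking minimums then yields the bound. Concretely, let $B \subseteq V(G)$ be a minimum $\ell$-leaky standard forcing set, so that $|B| = Z_{(\ell)}(G)$, and fix an arbitrary placement of $\ell$ leaks in $G$. I want to show $B$ still forces all of $G$ blue under the positive semidefinite color change rule with those same leaks present, from which $\psdl{\ell}{G} \le |B| = Z_{(\ell)}(G)$ follows immediately.

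The key technical step, foreshadowed in the paragraph preceding the proposition, is the observation that any legal standard force is also a legal psd force. To make this precise, suppose at some stage the set of blue vertices is $B^+$ and $u \in B^+$ is a non-leak blue vertex with a unique non-blue neighbor $w$ in $G$. Let $W_i$ be the component of $G - B^+$ containing $w$. Then in the subgraph $G[W_i \cup B^+]$ used by Definition~\ref{def:PSDforce}, the vertex $u$ still has $w$ as a neighbor (the edge $uw$ lies in $G$ and both endpoints lie in $W_i \cup B^+$), and any other non-blue neighbor of $u$ in this subgraph would also be a non-blue neighbor of $u$ in $G$, contradicting uniqueness. Hence $u \to w$ is a valid psd force; moreover, the leak restriction is the same under either rule, since $u$ is permitted to force in both regimes precisely when $u$ is not a leak.

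Given this, I will run the standard leaky forcing process on $B$ with the chosen leak placement (which, by hypothesis, terminates with all of $V(G)$ blue) and reinterpret each individual standard force as a psd force via the observation above. This yields a sequence of legal psd forces starting from $B$ that colors all of $V(G)$ blue under the same leak placement. Since the leak placement was arbitrary, $B$ is an $\ell$-leaky positive semidefinite forcing set, and the desired inequality $\psdl{\ell}{G} \le Z_{(\ell)}(G)$ follows from the definition of $\psdl{\ell}{G}$ as a minimum.

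There is no real obstacle here; the only point requiring a moment's care is verifying that restricting attention to the induced subgraph $G[W_i \cup B^+]$ cannot create new non-blue neighbors of $u$, which is immediate since induced subgraphs only remove vertices and edges. The argument is essentially the $\ell = 0$ version from \cite{PSDOrigins} with the additional remark that leaks affect standard and psd forcing symmetrically.
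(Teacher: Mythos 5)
Your proof is correct and takes the same route the paper intends: the paper derives this proposition directly from the remark preceding it that any valid $\ell$-leaky standard force is a valid $\ell$-leaky positive semidefinite force, so every $\ell$-leaky standard forcing set is an $\ell$-leaky psd forcing set and the minimums compare as claimed. Your write-up simply fills in the (correct) detail that passing to the induced subgraph $G[W_i \cup B^+]$ cannot create new non-blue neighbors of the forcing vertex.
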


This bound was shown to be tight for $\ell = 0$ in \cite{PSDOrigins} and the graph families in Section \ref{sec:PathCycleComplete}, together with the results in \cite{DillmanKenter}, show that this bound is tight for all $0\le \ell \le n$. Leaks cause certain vertices to lose their forcing ability, disrupting the forcing process. This is the key to the following theorem and is crucial to establishing the results in Section \ref{sec:GraphFamilies}. We note that the following is true regardless of what forcing variation is used and is stated here specifically for $\ell$-leaky positive semidefinite zero forcing. 


\begin{thm}
\label{thm:Inequalities}
    For any graph $G$ of order $n$, 
    $$\psdl{0}{G}\leq\psdl{1}{G}\leq\psdl{2}{G}\leq\cdots\leq\psdl{n}{G}.$$
\end{thm}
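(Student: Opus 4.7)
The plan is to prove the chain by establishing the single inequality $\psdl{\ell}{G}\le\psdl{\ell+1}{G}$ for every $0\le\ell\le n-1$, and then concatenating. To do this I will show the stronger statement that every $(\ell+1)$-leaky positive semidefinite forcing set is automatically an $\ell$-leaky positive semidefinite forcing set; taking the minimum-size such set then gives the numerical inequality.

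So let $B$ be an optimal $(\ell+1)$-leaky psd forcing set, i.e. $|B|=\psdl{\ell+1}{G}$. To verify that $B$ is an $\ell$-leaky psd forcing set, fix an arbitrary leak placement $L\subseteq V(G)$ with $|L|=\ell$. Because $\ell<n$, the set $V(G)\setminus L$ is nonempty, so pick any $v\in V(G)\setminus L$ and form $L':=L\cup\{v\}$, a set of $\ell+1$ leaks. By hypothesis, $B$ forces $G$ under the psd color change rule despite the leak set $L'$, so there is some valid sequence of psd forces, each performed by a vertex outside $L'$, that colors all of $V(G)$ blue.

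Now observe that $L\subseteq L'$, so any vertex that is not a leak under $L'$ is also not a leak under $L$. Consequently the same sequence of forces is still valid when the leak set is just $L$: at each step, the forcing vertex is still permitted to force, and the positive semidefinite color change rule (which depends only on the current color classes and the components of $G$ minus the blue set) is insensitive to whether the non-leak, non-forcing vertices happen to be labeled as leaks. Therefore $B$ forces $G$ under leak set $L$; since $L$ was arbitrary, $B$ is an $\ell$-leaky psd forcing set, yielding $\psdl{\ell}{G}\le|B|=\psdl{\ell+1}{G}$.

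The argument is short and the only subtle point — the ``main obstacle'' such as it is — is making clear that a psd forcing sequence valid in the presence of a larger leak set remains valid in the presence of any smaller leak set. This is essentially the monotonicity of the forcing rules in the leak set, and once it is articulated the theorem follows by chaining the inequalities $\psdl{0}{G}\le\psdl{1}{G}\le\cdots\le\psdl{n}{G}$.
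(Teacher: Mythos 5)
Your proposal is correct and follows essentially the same approach as the paper: both rest on the monotonicity of the psd forcing process in the leak set (a force valid in the presence of a larger leak set remains valid for any smaller one), so an $(\ell+1)$-leaky psd forcing set is automatically $\ell$-leaky. Your version is somewhat more carefully argued than the paper's one-sentence proof, in particular in padding an arbitrary $\ell$-leak placement up to $\ell+1$ leaks before invoking the hypothesis, but the underlying idea is identical.
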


\begin{proof}
     Since leaks disrupt the forcing process, a set that can force an entire graph in the presence of any $\ell$ leaks is sufficient to force a graph with less than $\ell$ leaks, noting that if removing a leak created any change to the coloring process, it would be true that without that removed leak, a vertex that before was unable to force is now able to force.
\end{proof}

\begin{thm}
\label{thm:degree}
    For any graph $G$ with $\ell$ leaks, any $\ell$-leaky positive semidefinite forcing set  includes all vertices of degree $\ell$ or less.
\end{thm}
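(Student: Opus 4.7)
The plan is to argue by contradiction: suppose $B^+_{(\ell)}$ is an $\ell$-leaky positive semidefinite forcing set of $G$, and suppose there exists a vertex $v \in V(G) \setminus B^+_{(\ell)}$ with $\deg(v) \leq \ell$. I will exhibit a placement of the $\ell$ leaks under which $v$ can never be forced blue, contradicting that $B^+_{(\ell)}$ is $\ell$-leaky.

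The leak placement is natural: since $\deg(v) \leq \ell$, designate every neighbor of $v$ as a leak (using at most $\ell$ leaks), and place any remaining leaks arbitrarily among the other vertices of $G$. Now I would appeal directly to Definition \ref{def:PSDforce}: for any round of the forcing process, if a vertex $w$ is turned blue by a psd force $u \to w$, then $u$ must be a blue vertex adjacent to $w$ that performs the force. In particular, the only vertices that could ever force $v$ blue are the vertices in $N_G(v)$. But by construction every vertex of $N_G(v)$ is a leak, and a leak loses its ability to force. Hence no psd force can target $v$ at any stage, so $v$ remains non-blue throughout the entire process. This contradicts the assumption that $B^+_{(\ell)}$ forces all of $G$ regardless of leak placement; therefore every vertex of degree at most $\ell$ must lie in $B^+_{(\ell)}$.

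There is no real obstacle here: the argument uses only the facts that (i) leaks are chosen by an adversary after the initial blue set is fixed, (ii) any psd force is performed by a blue vertex on one of its own non-blue neighbors, and (iii) leaks cannot perform forces. The only point meriting a brief sentence in the write-up is the case $\deg(v) < \ell$, where I should note that the extra leaks can be placed anywhere (for instance, on non-neighbors of $v$), since this does not affect the argument that every neighbor of $v$ is silenced.
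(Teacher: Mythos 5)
Your proposal is correct and is essentially identical to the paper's own argument: both proceed by contradiction, place leaks on all neighbors of a non-blue vertex $v$ with $\deg(v)\le\ell$, and observe that no psd force can then target $v$. Your explicit remark about distributing any surplus leaks when $\deg(v)<\ell$ is a small tidying-up that the paper leaves implicit.
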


\begin{proof}
   Let $B^+_{(\ell)}$ be an $\ell$-leaky positive semidefinite forcing set of a graph $G$ with $\ell$ leaks. Suppose, to the contrary, that not every vertex of degree $\ell$ or less is initially colored blue. So, there is at least one $v\in V(G)\setminus B^+_{(\ell)}$ such that $|N_G(v)| \le \ell$. Assume those vertices in $u\in N_G(v)$ are leaks in $G$. Now, regardless of which $G[W_{i} \cup B^+_{(\ell)}]$ is observed to attempt to force $v$, none of the neighbors of $v$ would be able to force, and therefore $B^+_{(\ell)}$ could not be an $\ell$-leaky positive semidefinite forcing set. Therefore, the $\ell$-leaky positive semidefinite forcing set must contain all those vertices in G with degree $\ell$ or less.
\end{proof}

\begin{cor}\label{cor:mindegree}
    For any graph $G$ on $n$ vertices, $Z_{(\ell)}^+(G) = n$ if and only if $\Delta(G) \le \ell$. 
\end{cor}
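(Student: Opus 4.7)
The plan is to prove both directions. The forward direction is an immediate consequence of Theorem \ref{thm:degree}: if $\Delta(G) \le \ell$, then every vertex of $G$ has degree at most $\ell$, so Theorem \ref{thm:degree} forces every $\ell$-leaky positive semidefinite forcing set to contain all of $V(G)$, giving $Z_{(\ell)}^+(G) = n$.

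For the reverse direction, I would prove the contrapositive: assume $\Delta(G) \ge \ell+1$ and exhibit an $\ell$-leaky positive semidefinite forcing set of size $n-1$. Pick a vertex $v$ with $\deg(v) \ge \ell + 1$ and let $B^+_{(\ell)} = V(G) \setminus \{v\}$. Regardless of how the adversary chooses the $\ell$ leaks, the vertex $v$ has at least one non-leaky neighbor $u$ (since $v$ has more than $\ell$ neighbors). In the induced subgraph corresponding to the component of $G - B^+_{(\ell)}$ containing $v$ (which is just $\{v\}$), the vertex $u \in B^+_{(\ell)}$ has $v$ as its only non-blue neighbor in that component, so $u \to v$ is a valid psd force. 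Hence $B^+_{(\ell)}$ is an $\ell$-leaky psd forcing set of size $n-1$, giving $Z_{(\ell)}^+(G) \le n-1 < n$.

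There is no real obstacle here; the corollary is essentially a packaging of Theorem \ref{thm:degree} with the single-missing-vertex construction. The only subtlety to flag is making sure the argument for the reverse direction holds \emph{for every} placement of leaks (the definition of an $\ell$-leaky forcing set is a universal quantifier over leak placements), which is why the bound $\deg(v) \ge \ell+1$ is exactly what is needed: it guarantees at least one surviving (non-leak) forcer for $v$ in every configuration.
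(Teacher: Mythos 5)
Your proof is correct; the paper states this corollary without proof, and your argument is exactly the intended one (one direction is immediate from Theorem~\ref{thm:degree}, the other from the $V(G)\setminus\{v\}$ construction with $\deg(v)\ge\ell+1$ guaranteeing a non-leak forcer in every leak placement). The only cosmetic remark is that what you call the ``forward direction'' is the ``if'' half of the statement as written, but this does not affect correctness.
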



We now introduce the notion of a set of forcing processes, introduced in \cite{LeakyReslience}, and the set of all possible forces to prove how the $\ell$-leaky forcing changes when we remove an edge from the graph $G$. 

\begin{defn}
   Let $B^+\subseteq V(G)$ be a subset of vertices initially colored blue. A \textit{set of psd forces}, $\textbf{F}_+$, with respect to the set $B^+$ in $G$, is a set of forces such that there is a chronological list of forces in $\textbf{F}_+$ where each force can occur, and the whole graph becomes blue.
\end{defn}

\begin{defn}
    For any subset of vertices $B^+\subseteq V(G)$, the \textit{set of all possible positive semidefinite forces starting with $B^+$} is denoted $\mathcal{F}^+(B)$. This means that a psd force $u\to v\in \mathcal{F}^+(B)$ exists if there is a set of forces $\textbf{F}_+$ in $G$ starting with the set $B$ that contains $u\to v$.
\end{defn}



Note that the previous definitions extend naturally to $\ell$ leaks appearing in $G$. Thus, when we allow $\ell$ leaks to appear in the graph, the next proposition tells us the number of ways we must have to force a vertex in $\mathcal{F}^+(B_{(\ell)}^+)$ starting with a $\ell$-leaky positive semidefinite forcing set $B_{(\ell)}^+$. An analogous result appeared with regard to $\ell$-leaky standard forcing in \cite{LeakyReslience} and an identical proof to that which follows here.

\begin{prop}\label{prop:EllDiffFForces}
    If $B_{(\ell)}^+$ is an $\ell$-leaky positive semidefinite forcing set, then for all $v\in V(G)\setminus B_{(\ell)}^+$, there exist forces $\ell+1$ distinct ways to force $v$, i.e. $x_1\to v, x_2\to v, ... , x_{\ell+1}\to v \in \mathcal{F}^+(B_{(\ell)}^+)$, $x_i\ne x_j$ for all $i\ne j$. 
\end{prop}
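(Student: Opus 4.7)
The plan is to prove this by contradiction, mimicking the argument for the analogous standard-forcing result in \cite{LeakyReslience}. Suppose, toward a contradiction, that some $v \in V(G)\setminus B_{(\ell)}^+$ admits strictly fewer than $\ell+1$ distinct forcers in $\mathcal{F}^+(B_{(\ell)}^+)$. Let
$$ S \;=\; \bigl\{ u \in V(G) : u \to v \in \mathcal{F}^+(B_{(\ell)}^+)\bigr\}, $$
and write $S = \{x_1,\ldots,x_k\}$ with $k \le \ell$. By the definition of $\mathcal{F}^+(B_{(\ell)}^+)$ extended to the $\ell$-leaky setting, $S$ contains \emph{every} vertex that could conceivably force $v$ across any leak placement and any completion of the forcing process.

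Next, I would exhibit a leak configuration that kills every potential forcer of $v$. Choose a set of leaks $L \supseteq S$ with $|L|=\ell$, padding $S$ by arbitrary vertices of $V(G)\setminus(S\cup\{v\})$ if $k<\ell$ (here one may assume the graph is not completely blue already, otherwise the claim is vacuous). Since $B_{(\ell)}^+$ is an $\ell$-leaky positive semidefinite forcing set, it must still force $G$ despite the leaks at $L$, so some valid sequence of psd forces under this placement eventually colors $v$. Let $u \to v$ denote the force that colors $v$ in this sequence.

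Now derive the contradiction in two lines: on the one hand, a leak cannot perform a force, so $u \notin L$ and in particular $u \notin S$; on the other hand, $u \to v$ is a psd force realized in a valid leaky forcing process starting from $B_{(\ell)}^+$, so $u \to v \in \mathcal{F}^+(B_{(\ell)}^+)$ and therefore $u \in S$. This is the desired contradiction, so every $v \in V(G)\setminus B_{(\ell)}^+$ must admit at least $\ell+1$ distinct forcers in $\mathcal{F}^+(B_{(\ell)}^+)$.

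The main obstacle is mostly conceptual rather than technical: one must be careful that the ``natural extension'' of $\mathcal{F}^+(B_{(\ell)}^+)$ to the leaky setting collects forces over \emph{all} leak placements (not just the empty one), so that the set $S$ defined above really is an obstruction to forcing $v$ when its elements are turned into leaks. Once that reading is pinned down, the argument is just a direct pigeonhole-style contradiction, and the fact that the proof is identical to the standard-forcing version in \cite{LeakyReslience} is because the psd color change rule plays no special role — the only facts used are that leaks cannot force and that $B_{(\ell)}^+$ succeeds under every placement of $\ell$ leaks.
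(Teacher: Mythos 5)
Your argument is correct and is exactly the argument the paper intends: the paper omits the proof, stating it is identical to the one for $\ell$-leaky standard forcing in the cited reference, and that proof is precisely your contradiction — leak every potential forcer of $v$, note that the set $B_{(\ell)}^+$ must still force $v$ via some non-leaked vertex $u$, and observe that any force valid under a leak placement is also a force in $\mathcal{F}^+(B_{(\ell)}^+)$, so $u$ should already have been counted. Your closing remark that the psd color change rule plays no role (only that leaks cannot force and that the set succeeds under every placement) matches the paper's own framing of why the proof carries over verbatim.
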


Investigating leaks appearing in graphs further, we determine what happens to the positive semidefinite $\ell$-leaky forcing number when we delete an edge from the graph. Adding the leaks to the graph presents difficulties when deleting an edge, since the leak can appear on those vertices incident to the deleted edge. When $\ell = 0$, the following result was proven for positive semidefinite forcing in \cite{PSDforcingEdgeDelete}.

\begin{thm}\cite{PSDforcingEdgeDelete}
    Let $G$ be a graph. For any edge $e\in E(G)$
        $$-1 \le Z_+(G) - Z_+(G-e) \le 1.$$
\end{thm}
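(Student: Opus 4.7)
I would prove both inequalities by a common augmentation argument: starting from an optimal positive semidefinite forcing set of one graph, adjoin a single endpoint of $e = uv$ to obtain a forcing set of the other graph. Throughout, write $B' = B \cup \{v\}$ and assume $v \notin B$ (otherwise $B$ itself already works and each inequality is immediate).

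For the upper bound $Z_+(G) \le Z_+(G-e) + 1$, let $B$ be an optimal PSD forcing set of $G - e$. The key observation is that committing $v$ to blue at the outset means $v$ remains blue at every subsequent step, and since every edge incident to $v$ (including $e$) is thereby absent from the induced non-blue subgraph, the non-blue subgraphs in $G$ and $G-e$ at corresponding times differ only by the deletion of the vertex $v$. I would then replay the forcing sequence $x_1 \to w_1, \dots, x_m \to w_m$ of $G - e$ inside $G$ starting from $B'$, skipping any force with $w_t = v$, and prove by induction that the blue set in $G$ after $t$ (non-skipped) steps equals the blue set of $G - e$ after $t$ steps together with $\{v\}$. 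To verify each non-skipped force $x_t \to w_t$ is valid in $G$, I would split into two cases: if $v$ is already blue in the $G-e$ process at that time, the two non-blue subgraphs coincide and validity transfers verbatim; otherwise the component of $w_t$ in $G$'s non-blue subgraph is a subset of its component in $G-e$ (deleting vertex $v$ can only split components), and since $v$ is blue, the extra edge $e$ never contributes an additional non-blue neighbor to $x_t$ even when $x_t \in \{u,v\}$, so $w_t$ remains the unique non-blue neighbor of $x_t$ in its component.

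For the reverse bound $Z_+(G-e) \le Z_+(G) + 1$, I would run the symmetric argument: take $B$ an optimal PSD forcing set of $G$, set $B' = B \cup \{v\}$, and replay the forcing sequence of $G$ inside $G-e$, again skipping forces targeting $v$. The same principle that $v$ is blue at all times keeps the non-blue subgraphs in agreement, and deleting the edge $e$ can only remove a neighbor $w_t$ of $x_t$ if $\{x_t, w_t\} = e$; the subcase $w_t = v$ is skipped, while the subcase $x_t = v$ with $v$ still non-blue in $G$ cannot occur (since $x_t$ is blue in the $G$ process at that step).

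The main obstacle is the bookkeeping for forces performed by a vertex in $\{u, v\}$, since these are the only vertices whose neighborhoods differ between $G$ and $G-e$, and for the induced subgraph components that straddle $v$. All such cases collapse under the single key observation: because $v$ is blue from step $0$ in $B'$, the vertex $v$ and the edge $e$ are simultaneously removed from the non-blue subgraph, so only component splits (not merges or spurious adjacencies) can occur, and the unique-non-blue-neighbor condition is preserved along the replayed sequence.
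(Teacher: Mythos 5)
The paper states this result as a citation and gives no proof, so I am judging your plan on its own merits. There is a genuine gap in the upper-bound direction, and it is not a bookkeeping issue: the key claim that ``since $v$ is blue, the extra edge $e$ never contributes an additional non-blue neighbor to $x_t$ even when $x_t\in\{u,v\}$'' is false when $x_t=v$. Colouring $v$ blue neutralizes the effect of $e$ on \emph{$u$'s} forces (a blue $v$ is never a non-blue neighbor of $u$), but it does nothing for \emph{$v$'s own} forces: in $G$ the vertex $v$ acquires the new neighbor $u$, and $u$ may still be white and may lie in the same component as $v$'s intended target. Concretely, let $G-e$ be the path $b-v-w-u$ with $e=uv$, so that $G$ is the triangle $uvw$ with a pendant $b$ attached to $v$. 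Then $B=\{b\}$ is a positive semidefinite forcing set of $G-e$, but $B\cup\{v\}=\{b,v\}$ is not one of $G$: the white set $\{w,u\}$ is a single component in which $v$ has two non-blue neighbors, and no force is possible. The same example defeats your preliminary reduction ``if $v\in B$ then $B$ itself already works,'' since $\{v\}$ forces $G-e$ but not $G$. Your reverse direction has a parallel (milder) defect: the subcase you must worry about is $x_t=v$, $w_t=u$, i.e.\ the $G$-process forces \emph{along} the edge $e$; that force simply cannot be replayed in $G-e$, and your dismissal of this subcase (``$v$ still non-blue cannot occur'') does not address it.

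The missing idea is that the endpoint you adjoin must be chosen depending on the forcing process, and that such a choice is always possible. For the upper bound, adjoining $u$ is problematic only if at some step $u$ is blue, performs a force, and $v$ is white in the target's component; adjoining $v$ is problematic only in the symmetric situation with the roles swapped. Since blueness is monotone along a chronological list of forces, the two situations (``$v$ blue while $u$ white'' versus ``$u$ blue while $v$ white'') cannot both occur in one set of forces, so at least one endpoint is safe; with that endpoint your replay argument (components only refine, neighborhoods only gain a blue vertex) does go through. Similarly, for the lower bound one should adjoin the endpoint of $e$ that the $G$-process forces across $e$, if any. As written, your proposal fixes $v$ in advance and therefore proves neither inequality.
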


Towards a similar result for one leak, results are needed which give us a way to start with a forcing process for a forcing set and switch to a new, though related, forcing process. The following notation and results are adapted from \cite{LeakyReslience} where authors proved results for leaky standard forcing. We omit proofs that were found to be identical to that in \cite{LeakyReslience} and include ones that needed a varied approach because of the positive semidefinite forcing process. The observed similarities between leaky standard and positive semidefinite forcing again motivated the investigation into forts in Section \ref{sec:Forts}

\begin{defn}
     Given an initial set of blue vertices $B^+$ and set of psd forces $\textbf{F}_+$, we say that the set $(B^+)'$, $B^+\subseteq (B^+)'  \subseteq V(G)$, is \textit{obtained from $B^+$ using $\textbf{F}_+$} if $B^+$ can color $(B^+)'$ using only a subset of $\textbf{F}_+$.
\end{defn}

Let $B$ be a fixed blue vertex set of a graph $G$ with sets of psd forces $\textbf{F}_+$ and $\textbf{F}_+'$. We are able to use the forcing process $\textbf{F}_+$ to obtain a blue vertex set $B'$ from $B$, then continue the forcing with process $\textbf{F}_+'$. Formalizing this, suppose $S\subset V(G)$ and let 
    $$\textbf{F}_+(S) = \{ x\to y \in F_+ : y\notin S\}$$
and 
    $$\textbf{F}_+\setminus \textbf{F}_+(S) = \{x\to y \in F : y\in S\}$$
Note that though the positive semidefinite forcing process can be viewed as induced trees of the graph, the above sets $\textbf{F}_+$ and $\textbf{F}_+'$ consists of a list of single forces $x\to y$. The proof of the following lemma is identical to that for standard forcing in \cite{LeakyReslience}.
\begin{lem}
    Let $B^+$ be a positive semidefinite forcing set of a graph $G$ with sets of psd forces $\textbf{F}_+$ and $\textbf{F}_+'$. Then, the set 
        $$(\textbf{F}_+ \setminus \textbf{F}_+((B^+)') \cup \textbf{F}_+'((B^+)')$$ is a set of psd forces of $B^+$ for any $(B^+)'$ obtained from $B^+$ using $\textbf{F}_+$. 
\end{lem}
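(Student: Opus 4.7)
The plan is to build a valid chronological order on $(\textbf{F}_+ \setminus \textbf{F}_+((B^+)')) \cup \textbf{F}_+'((B^+)')$ in two phases corresponding to the two halves of the union. Phase~1 uses forces inherited from $\textbf{F}_+$ to color the vertices of $(B^+)'$ starting from $B^+$, and Phase~2 uses the remaining forces, drawn from $\textbf{F}_+'$, to reach $V(G)$ starting from $(B^+)'$.

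Phase~1 is essentially free from the hypothesis: since $(B^+)'$ is obtained from $B^+$ using $\textbf{F}_+$, there is a chronological order of some subset of $\textbf{F}_+$ that takes $B^+$ to $(B^+)'$. Because each non-$B^+$ vertex is the target of exactly one force in a valid psd forcing process, this subset is forced to be $\{x\to y\in \textbf{F}_+ : y \in (B^+)'\} = \textbf{F}_+\setminus \textbf{F}_+((B^+)')$, so I will simply reuse that order.

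For Phase~2 I plan to apply the forces of $\textbf{F}_+'((B^+)')$ in the chronological order inherited from $\textbf{F}_+'$. To show that a generic force $x\to y$ from this list remains valid, I will compare the blue set $B_{\text{new}}^{t}$ just before $x\to y$ fires in the new process with the blue set $B^+_t$ just before $x\to y$ fires in $\textbf{F}_+'$. The key inclusion $B^+_t \subseteq B_{\text{new}}^{t}$ follows from two observations: first, $B^+ \subseteq (B^+)' \subseteq B_{\text{new}}^{t}$; second, any vertex $v$ colored in $\textbf{F}_+'$ before $x\to y$ either lies in $(B^+)'$ (hence is already blue at the start of Phase~2) or is the target of some force in $\textbf{F}_+'((B^+)')$ that chronologically precedes $x\to y$, which by construction has already been applied in Phase~2.

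The step I expect to be the main obstacle — and the reason the psd setting needs its own argument rather than a direct citation of the standard-forcing lemma in \cite{LeakyReslience} — is showing that the psd color change rule survives the enlargement of the blue set from $B^+_t$ to $B_{\text{new}}^t$. Here I will use a monotonicity observation: enlarging the blue set can only shrink or split the components of its complement, so the component $W_{i,\text{new}}$ of $G - B_{\text{new}}^{t}$ containing $y$ is contained in the component $W_i$ of $G - B^+_t$ containing $y$. Hence $N(x)\cap W_{i,\text{new}} \subseteq N(x)\cap W_i = \{y\}$, and since $y \in W_{i,\text{new}}$ by construction, equality holds, so in $G[W_{i,\text{new}} \cup B_{\text{new}}^{t}]$ the vertex $y$ is still the unique non-blue neighbor of $x$. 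This validates the force in Phase~2. Because every vertex of $V(G)\setminus (B^+)'$ is the target of some element of $\textbf{F}_+'((B^+)')$, Phase~2 terminates with blue set $V(G)$, completing the verification that $(\textbf{F}_+ \setminus \textbf{F}_+((B^+)')) \cup \textbf{F}_+'((B^+)')$ is a set of psd forces of $B^+$.
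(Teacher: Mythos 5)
Your proof is correct, and it follows the route the paper itself intends: the paper omits the argument entirely, asserting it is identical to the standard-forcing proof in \cite{LeakyReslience}, and your two-phase construction (recolor $(B^+)'$ via $\textbf{F}_+ \setminus \textbf{F}_+((B^+)')$, then run $\textbf{F}_+'((B^+)')$ in its inherited order) is exactly that argument. The one psd-specific detail that genuinely needs checking --- that a valid psd force survives enlarging the blue set because components of the complement can only shrink, so $N(x)\cap W_{i,\text{new}} \subseteq N(x)\cap W_i = \{y\}$ --- is supplied correctly, so your writeup is in fact more complete than the paper's.
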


From the previous lemma, a result can be proved giving us criteria for a specified set to be a $1$-leaky positive semidefinite forcing set.
\begin{thm}\label{thm:1leakySets}
A set $B$ is a $1$-positive semidefinite leaky forcing set if and only if for all $v\in V(G)\setminus B$, there exists $x\to v$, $y\to v \in \mathcal{F}^+(B)$ with $y\ne x$. 
\end{thm}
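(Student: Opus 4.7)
The forward direction is immediate from Proposition \ref{prop:EllDiffFForces} with $\ell=1$, which supplies two distinct forcers $x\to v,\ y\to v\in\mathcal{F}^+(B)$ for every $v\in V(G)\setminus B$ whenever $B$ is a $1$-leaky PSD forcing set. For the reverse direction, assume the two-distinct-forcer condition on $B$ and fix an arbitrary potential leak $z\in V(G)$; the goal is to produce a PSD forcing process of $B$ in which $z$ never acts as a forcer. The hypothesis already implies that $B$ is a $0$-leaky PSD forcing set (every vertex outside $B$ is the target of some force in $\mathcal{F}^+(B)$), so the family of PSD forcing processes of $B$ is nonempty. Choose $\textbf{F}_+$ in this family minimizing the number $k(\textbf{F}_+)$ of forces whose forcer is $z$; the plan is to show $k(\textbf{F}_+)=0$, since such an $\textbf{F}_+$ remains a valid process when $z$ is disabled.

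Assume for contradiction $k(\textbf{F}_+)\ge 1$, and let $z\to v$ be the \emph{earliest} $z$-force in a fixed chronological ordering of $\textbf{F}_+$. Let $(B^+)'$ denote the blue set immediately before $z\to v$ is performed, so that $v\notin(B^+)'$, the set $(B^+)'$ is obtained from $B$ using $\textbf{F}_+$, and no force with forcer $z$ has been used in its construction. By hypothesis there exists $x\neq z$ with $x\to v\in\mathcal{F}^+(B)$; fix a PSD forcing process $\textbf{F}_+'$ of $B$ containing $x\to v$. Applying the preceding swap lemma with $(B^+)'$ then produces
\[
\textbf{F}_+^{\text{new}} = (\textbf{F}_+\setminus\textbf{F}_+((B^+)'))\cup\textbf{F}_+'((B^+)'),
\]
a PSD forcing process of $B$ in which the troublesome force $z\to v$ has been replaced by $x\to v$.

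The main obstacle is to show $k(\textbf{F}_+^{\text{new}})<k(\textbf{F}_+)$ and thereby contradict the minimality of $\textbf{F}_+$. By the choice of $(B^+)'$, the first piece $\textbf{F}_+\setminus\textbf{F}_+((B^+)')$ carries no $z$-forces, so every $z$-force in $\textbf{F}_+^{\text{new}}$ must originate in $\textbf{F}_+'((B^+)')$. To handle the possibility that $\textbf{F}_+'$ itself harbors further $z$-forces, I would additionally require $\textbf{F}_+'$ to minimize its $z$-force count among processes containing $x\to v$, and iterate the swap: whenever a residual $z$-force $z\to w$ remains, apply the two-forcer hypothesis to $w\in V(G)\setminus(B^+)'$ to produce a non-$z$ alternative and swap again. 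Because $V(G)$ is finite and each swap removes the earliest $z$-force without re-introducing anything strictly earlier, a lexicographic measure (total $z$-forces, then timestamp of the first $z$-force) strictly decreases, so the procedure terminates with a PSD forcing process of $B$ having strictly fewer $z$-forces than $\textbf{F}_+$. This contradicts minimality, yielding $k(\textbf{F}_+)=0$ and showing that $B$ still forces $G$ when $z$ is a leak.
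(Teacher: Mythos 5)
Your forward direction and your overall plan (swap lemma plus an extremal choice of process) are reasonable, and you correctly identify the crux --- but the crux is not actually resolved, and this is a genuine gap. After the swap, the $z$-forces of $\textbf{F}_+^{\text{new}}$ are exactly the $z$-forces of $\textbf{F}_+'$ whose targets lie outside $(B^+)'$, and nothing bounds this quantity by $k(\textbf{F}_+)$: requiring $\textbf{F}_+'$ to minimize its own $z$-force count among processes containing $x\to v$ gives no comparison with the process you started from, so the first coordinate of your lexicographic measure can strictly increase. The fallback coordinate fares no better: the swap lemma only guarantees that $\textbf{F}_+^{\text{new}}$ admits \emph{some} valid chronological ordering, and in such an ordering the first $z$-force could occur immediately after $(B^+)'$ is rebuilt (for instance if the only forces of $\textbf{F}_+'((B^+)')$ applicable at $(B^+)'$ have forcer $z$), so the ``timestamp of the first $z$-force'' is only guaranteed not to decrease, not to strictly increase. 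Since neither coordinate is shown to improve, the claimed descent --- and hence the termination of your iteration --- is unproven; a priori the procedure could shuttle forever among processes each containing at least one $z$-force.

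The paper sidesteps this by performing a \emph{single} swap that eliminates all of the leak's forces at once rather than one at a time: writing $u_\ell\to z_1,\dots,u_\ell\to z_j$ for every force the leak performs in $\textbf{F}_+$, it chooses $B'$ so that all of these forces are simultaneously pending with no $z_i$ yet blue, uses the hypothesis to obtain substitutes $y_i\to z_i$ with $y_i\ne u_\ell$ for every $i$, and picks one process $\textbf{F}_+'$ containing all of the $y_i\to z_i$; it then argues that $u_\ell$ performs no force in $\textbf{F}_+'(B')$, so the single swapped process already witnesses that $B$ forces $G$ with $u_\ell$ leaked. To repair your version you would need either to adopt this all-at-once substitution, or to replace the minimal-$k$ extremal argument with a quantity that provably improves under the swap --- the asserted lexicographic decrease is not something the swap lemma delivers.
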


\begin{proof}
    The forward direction follows from Proposition \ref{prop:EllDiffFForces}. Now, suppose that $B\subset V(G)$ is a subset of initial blue vertices, and that for all $v\in V(G)\setminus B$, there exists $x\to v,~y\to v\in \mathcal{F}^+(B)$ with $y\ne x$. Let $u_\ell$ be a leaked vertex in $G$ and $\textbf{F}_+$ be some set of psd forces with respect to the set $B$ in $G$. If $u_\ell$ is the end of a forcing tree of $\textbf{F}_+$, then $B$ can force all of $G$ since no force originates from the leaked vertex $u_\ell$. 

    Now assume that $u_\ell$ is not the end of a forcing tree. So, there is some set of vertices $\{z_1, ... , z_j\}$ such that $u_\ell\to z_1, ~ u_\ell \to z_2,~ ... , ~u_\ell\to z_j$. We want to construct a set of forces of $G$ that do not use a force $u_\ell \to v$ for any $v\in V(G)\setminus B$. Let $B'$ be the set of blue vertices obtained from $B$ using $\textbf{F}_+$ such that $u_\ell\to z_1, ~ u_\ell \to z_2,~ ... , ~u_\ell\to z_j$ are valid forces, but $z_i\notin B'$ for all $1\le i \le j$. By assumption, there must exist a collection of vertices $y_1, ... , y_j$ such that $y_i \to z_i ~ \text{ for all } 1\le i \le j$ noting that $y_i \ne u_\ell$ for all $i$, and that not all the $y_i$ need be distinct. 
    
    Since $\mathcal{F}^+(B)$ is the set of all forcing processes starting with $B$, there exists a set of forces $\textbf{F}_+'$ such that $y_i \to z_i \in \textbf{F}_+'$ for all $i$. Since $y_i\to z_i \in \textbf{F}_+'$, we know that $\textbf{F}_+'$ can force $N[y_i]\setminus \{z_i\}$ without vertices in $N[z_i]\setminus \{y_i\}$. Namely, $\textbf{F}_+'$ can force $N[y_i]\setminus \{z_i\}$ without the vertex $u_\ell$. Therefore, $u_\ell\to w \notin \textbf{F}_+'(B')$ for any $w \in V(G)$ and since since the set of forces $\textbf{F}_+ \setminus \textbf{F}_+(B')$ is constructed by removing those forces $u_\ell\to u$ such that $u\in B'$, we get that
        $$(\textbf{F}_+ \setminus \textbf{F}_+(B') \cup \textbf{F}_+'(B')$$
    is a set of forcing processes that do not use forces originating at the leaked vertex $u_\ell$.
\end{proof}

Using Theorem \ref{thm:1leakySets}, we obtain the following result, using an identical proof to that in \cite{LeakyReslience} for $1$-leaky standard forcing, though a proof for an upper bound on the difference eluded us. 

\begin{prop}\label{thm:EdgeDeletion}
    For every graph $G$ and every edge $e\in E(G)$, 
        $$-2 \le Z^+_{(1)}(G) - Z^+_{(1)}(G - e).$$
\end{prop}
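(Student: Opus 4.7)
The plan is to establish $\psdl{1}{G-e} \le \psdl{1}{G} + 2$, which rearranges to the stated inequality. Let $B$ be a minimum $1$-leaky positive semidefinite forcing set of $G$ and write $e = uv$. Define $B' = B \cup \{u,v\}$, so that $|B'| \le |B| + 2 = \psdl{1}{G} + 2$. The central claim is that $B'$ is a $1$-leaky positive semidefinite forcing set of $G - e$; this immediately gives the desired bound.

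To verify the claim, fix an arbitrary leak $q \in V(G-e) = V(G)$. Because $B \subseteq B'$ and any superset of an $\ell$-leaky forcing set is again an $\ell$-leaky forcing set, $B'$ is $1$-leaky positive semidefinite forcing in $G$; hence there exists a set of psd forces $\textbf{F}_+$ in $G$ from $B'$ that colors all of $V(G)$ without originating any force at $q$. I will argue that this identical sequence $\textbf{F}_+$ remains a valid set of psd forces in $G - e$ starting from $B'$, again never originating a force at $q$.

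The key observation is that $u, v \in B'$, so both remain blue throughout the entire process. Let $S$ denote an arbitrary intermediate blue set with $B' \subseteq S$. Because $u, v \in S$, the edge $e = uv$ is absent from both $G - S$ and $(G-e) - S$; these induced subgraphs are therefore identical and share the same component decomposition $W_1, \dots, W_k$. Now consider any force $x \to w$ of $\textbf{F}_+$ carried out at this step, with $w \in W_i$. The condition that $w$ is the unique non-blue neighbor of $x$ in $G[W_i \cup S]$ can differ from the corresponding condition in $(G-e)[W_i \cup S]$ only when $x \in \{u,v\}$ and the other endpoint of $e$ is a non-blue neighbor of $x$ lying in $W_i$. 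Since that other endpoint is in $B' \subseteq S$, it is blue and does not belong to $W_i$ at all, so the criterion is unchanged and the force remains valid in $G - e$. Iterating along the forces of $\textbf{F}_+$ shows that $B'$ colors all of $V(G - e)$ blue without ever originating a force at $q$.

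Since $q$ was arbitrary, $B'$ is a $1$-leaky positive semidefinite forcing set of $G - e$, so $\psdl{1}{G-e} \le |B'| \le \psdl{1}{G} + 2$, as required. The main subtlety is the local edge-deletion invariance: deleting an edge whose endpoints are both currently blue preserves both the component decomposition and the unique-non-blue-neighbor criterion at every subsequent step of the forcing process. Once this is in hand the rest is essentially bookkeeping, which is precisely why a symmetric upper bound on $\psdl{1}{G} - \psdl{1}{G-e}$ does not yield to the same tactic: there is no analogous ``blue up two vertices'' device for reintroducing a missing edge without risking disruption of a given forcing process in $G - e$.
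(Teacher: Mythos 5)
Your proof is correct, and it rests on the same core device the paper (via \cite{LeakyReslience}) uses: blue both endpoints of the deleted edge, so that $\psdl{1}{G-e}\le \psdl{1}{G}+2$. The one place you diverge is in how you certify that $B'=B\cup\{u,v\}$ is a $1$-leaky positive semidefinite forcing set of $G-e$. The paper routes this through Theorem~\ref{thm:1leakySets}: each $w\notin B$ has two distinct forcers in $\mathcal{F}^+(B)$ by Proposition~\ref{prop:EllDiffFForces}, these forces survive both the augmentation of the blue set and the deletion of $e$ (for exactly the reason you identify --- an edge with both endpoints permanently blue is invisible to the component decomposition and to the unique-non-blue-neighbor test), and then the characterization converts ``two forcers for every white vertex'' back into ``$1$-leaky forcing set.'' You instead argue per leak placement: fix $q$, take a process in $G$ from $B'$ avoiding forces out of $q$, and transport it verbatim to $G-e$. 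That is more elementary in that it never invokes Theorem~\ref{thm:1leakySets}, at the cost of leaning on an unproved (though true and standard) monotonicity claim --- that a superset of a $1$-leaky positive semidefinite forcing set is again one. That claim does need the same component-refinement argument you use elsewhere (components of $G-B'$ sit inside components of $G-B$, so a vertex's set of non-blue neighbors within the relevant component only shrinks), and since the paper never states it explicitly, a careful write-up should include that half-paragraph. With that supplied, your argument is complete and, like the paper's, makes clear why the trick is one-directional: there is no analogous two-vertex patch for reinserting an edge.
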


 We computationally verified (via SAGE) that the difference in Proposition \ref{thm:EdgeDeletion} is bounded above by one for all connected graphs up to order eight. We provide examples of graphs satisfying the difference $Z^+_{(1)}(G) - Z^+_{(1)}(G - e)$ being one in Figure \ref{fig:DifferenceDeleteEdge} with $1$-leaky sets given for each graph before and after edge deletion. The complete list of graphs given as graph6 strings up to order eight with the difference $Z^+_{(1)}(G) - Z^+_{(1)}(G - e) = 1$ for some edge in the graph $G$ can be found via this url \href{https://drive.google.com/file/d/17r9E-OKHVcZNjVg8iXk0N_gqc2yhXXBe/view?usp=drive_link}{https://tinyurl.com/PSDLeakyForcing}, and because of this we conjecture the following. 

\begin{conj}\label{conj:edgedeletion}
For every graph $G$ and every edge $e\in E(G)$
$$Z^+_{(1)}(G) - Z^+_{(1)}(G - e) \le 1.$$
\end{conj}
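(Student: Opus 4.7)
The plan is to establish the equivalent form $Z^+_{(1)}(G-e) \le Z^+_{(1)}(G) + 2$ by enlarging a minimum $1$-leaky positive semidefinite forcing set of $G$ by at most two vertices to obtain a $1$-leaky positive semidefinite forcing set of $G - e$. Write $e = uv$ and let $B$ be a minimum $1$-leaky positive semidefinite forcing set of $G$, so $|B| = Z^+_{(1)}(G)$. Set $B^\star := B \cup \{u, v\}$, so $|B^\star| \le |B| + 2$. Once $B^\star$ is verified to be a $1$-leaky positive semidefinite forcing set of $G - e$, the bound
$$Z^+_{(1)}(G-e) \le |B^\star| \le Z^+_{(1)}(G) + 2$$
follows, which rearranges to the desired inequality $-2 \le Z^+_{(1)}(G) - Z^+_{(1)}(G-e)$.

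I would verify this via the characterization in Theorem \ref{thm:1leakySets}: it is enough to show that for every $w \in V(G-e) \setminus B^\star$ there exist two distinct psd forces $x \to w$ and $y \to w$ in $\mathcal{F}^+(B^\star)$ computed in $G - e$. Two facts combine to give this. First, Proposition \ref{prop:EllDiffFForces} applied to $B$ in $G$ produces two distinct forces $x_w \to w$ and $y_w \to w$ in $\mathcal{F}^+(B)$ computed in $G$ for every $w \notin B$, and a standard blue-set monotonicity argument (replacing non-blue vertices with blue ones only shrinks non-blue components and cannot invalidate an existing psd force) shows these remain in $\mathcal{F}^+(B^\star)$ computed in $G$. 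Second, because both endpoints of $e$ lie in $B^\star$, the graph $G - B^\star$ is identical to $(G - e) - B^\star$, so the components $W_i$ arising in Definition \ref{def:PSDforce} coincide in the two graphs, and the induced subgraphs $G[W_i \cup B^\star]$ and $(G-e)[W_i \cup B^\star]$ differ only by the edge $e$ itself. But $e$ joins two blue vertices and therefore cannot be used by any psd force (whose target must be non-blue), so $\mathcal{F}^+(B^\star)$ is the same set of forces whether computed in $G$ or in $G - e$. The two distinct forces for each $w$ therefore transfer directly, and Theorem \ref{thm:1leakySets} gives that $B^\star$ is a $1$-leaky positive semidefinite forcing set of $G - e$.

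The main obstacle I foresee is routine bookkeeping: $u$ or $v$ may already belong to $B$, making the augmentation smaller than $+2$ (which only strengthens the bound); the non-blue components of $G - B^\star$ may be strictly finer than those of $G - B$, so the two forces supplied by Proposition \ref{prop:EllDiffFForces} must be re-examined within the refined components (monotonicity handles this cleanly); and if the psd forcing chain used to exhibit a particular $x_w \to w$ involved an intermediate force targeting $u$ or $v$, that force is now vacuous in $\mathcal{F}^+(B^\star)$ but the derived force $x_w \to w$ is still valid because $u$ and $v$ are already blue from the outset. None of these cases affects the load-bearing observation that $e$ has both endpoints blue under $B^\star$ and is therefore invisible to the psd forcing process, so the $1$-leaky positive semidefinite forcing property transfers from $G$ to $G - e$ without loss.
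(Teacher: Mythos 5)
Your proposal does not prove the stated inequality. The statement in question is the \emph{upper} bound $Z^+_{(1)}(G) - Z^+_{(1)}(G-e) \le 1$, which is equivalent to $Z^+_{(1)}(G) \le Z^+_{(1)}(G-e) + 1$: one must show that a $1$-leaky positive semidefinite forcing set of the \emph{edge-deleted} graph $G-e$ can be enlarged by at most one vertex to obtain a $1$-leaky positive semidefinite forcing set of $G$ itself. What you call the ``equivalent form,'' namely $Z^+_{(1)}(G-e) \le Z^+_{(1)}(G)+2$, rearranges to $-2 \le Z^+_{(1)}(G) - Z^+_{(1)}(G-e)$, which is the \emph{lower} bound --- precisely Proposition \ref{thm:EdgeDeletion}, already established in the paper. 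Your argument (add both endpoints of $e$ to a minimum forcing set of $G$, observe that an edge between two initially blue vertices is invisible to the psd color change rule, and transfer the two distinct forces per vertex via Theorem \ref{thm:1leakySets}) is a reasonable sketch of that proposition, but it says nothing about the direction actually asserted here. The asymmetry matters: deleting an edge both removes forcing opportunities and removes obstructions (a vertex loses a potential white neighbor), so neither direction follows formally from the other, and the augmentation trick of coloring the endpoints of $e$ does not address how a forcing set of $G-e$ copes with the reappearance of $e$ in $G$.

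You should also be aware that the statement is posed in the paper as Conjecture \ref{conj:edgedeletion}, not a theorem: the authors explicitly state that a proof of this upper bound eluded them, and they offer only computational verification for all connected graphs up to order eight. So there is no proof in the paper to match, and a correct proof would require a genuinely new idea beyond the machinery of Theorem \ref{thm:1leakySets} and Proposition \ref{prop:EllDiffFForces} as deployed in your sketch.
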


\begin{figure}
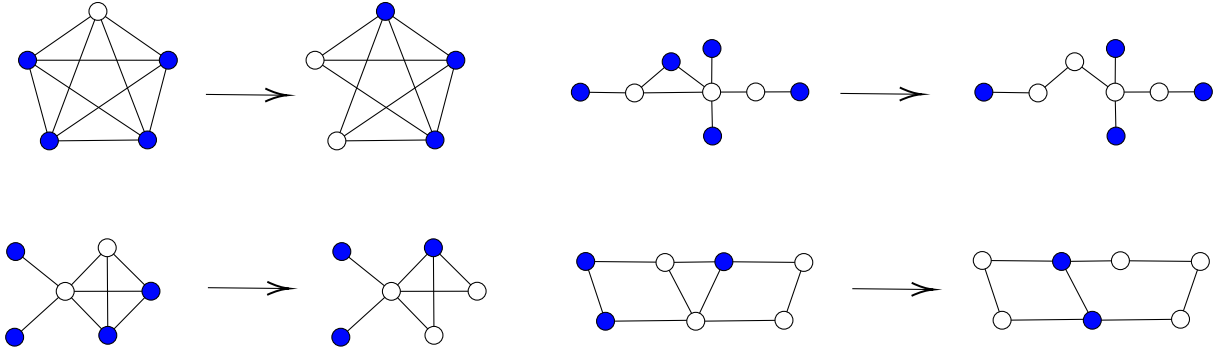

    \centering


    \caption{Examples of graphs satisfying equality in Conjecture \ref{conj:edgedeletion}}
    \label{fig:DifferenceDeleteEdge}
\end{figure}


\section{Particular graph families}\label{sec:GraphFamilies}

\quad \quad Towards understanding $\ell$-leaky positive semidefinite forcing, we determined the $\ell$-leaky positive semidefinite forcing number for various graph families. In particular, we determined this graph parameter for path, cycle, complete, wheel, complete bipartite, and tree graphs. When $\ell = 0$, the positive semidefinite leaky forcing number can be found in \cite{IEPGZF} and references therein. 

\subsection{Path, Cycle, and Complete Graphs}\label{sec:PathCycleComplete}
\begin{thm}
    Let $P_n$ denote the path graph on $n$ vertices. Then,

\begin{align*}
    \psdl{\ell}{P_n}&=\begin{cases}
        1 & \ell = 0,\\
        2 & \ell = 1,\\
        n & \ell \geq 2.
    \end{cases}
\end{align*}
\end{thm}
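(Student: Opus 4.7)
The plan is to split into the three cases $\ell=0$, $\ell=1$, and $\ell\geq 2$, treating each via a short direct argument combined with the structural results already established in Sections~\ref{sec:Prelims}--\ref{sec:PropertiesLeakyForcing}.

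For $\ell=0$, I would invoke Proposition~\ref{prop:psdlleqleaky} together with the standard fact that a single endpoint is a zero forcing set of $P_n$: labeling vertices $v_1,\dots,v_n$ along the path, $\{v_1\}$ iteratively forces $v_2\to v_3\to\cdots\to v_n$ under the standard rule, so $\psdl{0}{P_n}\leq Z(P_n)=1$, and a single vertex is necessary whenever $n\geq 1$. For $\ell\geq 2$, I would appeal directly to Corollary~\ref{cor:mindegree} (or equivalently Theorem~\ref{thm:degree}): since every vertex of $P_n$ has degree at most $2\leq\ell$, either $\Delta(P_n)\leq\ell$ forces $\psdl{\ell}{P_n}=n$, or every vertex of degree $\leq\ell$ must lie in the forcing set and hence all $n$ vertices must.

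The substantive case is $\ell=1$. For the lower bound, the two endpoints $v_1$ and $v_n$ have degree $1\leq\ell$, so Theorem~\ref{thm:degree} forces both of them into every $1$-leaky psd forcing set, giving $\psdl{1}{P_n}\geq 2$. For the upper bound, I claim $B^+_{(1)}=\{v_1,v_n\}$ is a $1$-leaky psd forcing set. The only component of $G-B^+_{(1)}$ is the interior path, so the psd rule reduces to standard forcing on $P_n$ started from the two endpoints. I would fix an arbitrary leak $v_k$ and exhibit a valid forcing chain: from the left, $v_1\to v_2\to\cdots\to v_{k-1}$, and from the right, $v_n\to v_{n-1}\to\cdots\to v_{k+1}$. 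At each step the forcer has exactly one non-blue neighbor, so the forces are legal even if one of the interior forcers happens to be $v_k$ itself (in which case we simply approach from the other side). Finally $v_k$ is forced by whichever of $v_{k-1}$ or $v_{k+1}$ is not the leak; since the leak is unique, at least one works. The boundary cases $k=1$, $k=n$, and leaks adjacent to the endpoints are verified by the same picture with one of the two fronts vacuously empty.

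The main obstacle is a purely bookkeeping one: in the $\ell=1$ upper bound I must be sure that the order of forces can always be chosen so that no step depends on a leaked forcer. The key observation that makes this routine rather than delicate is that, once $\{v_1,v_n\}$ is fixed as the initial blue set, the graph $G[W_1\cup B^+_{(1)}]$ coincides with $P_n$, so psd forcing is identical to standard forcing and the ``force from both endpoints toward the leak'' strategy has no hidden obstructions.
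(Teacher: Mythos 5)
Your proposal is correct and follows essentially the same route as the paper: the lower bound for $\ell=1$ via Theorem \ref{thm:degree} applied to the two degree-one endpoints, the upper bound by exhibiting $\{v_1,v_n\}$ as a $1$-leaky psd forcing set (forcing inward from both ends toward the leak), and the $\ell\geq 2$ case via Corollary \ref{cor:mindegree}. You simply spell out the endpoint-forcing argument and the $\ell=0$ case in more detail than the paper, which states them as immediate.
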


\begin{proof}
      Suppose $\ell=1$. By Theorem \ref{thm:degree}, all vertices with a degree of one must be colored, which would be both endpoints of $P_n$, implying $2\le \psdl{1}{P_n}$. Equality can be seen with the set of endpoints of any path graph forming a $1$-positive semidefinite leaky forcing set. For $\ell\geq 2$, we have $Z^+_{(\ell)}(G) = n$ by Corollary \ref{cor:mindegree}. 
\end{proof}

\begin{thm}
    Let $C_n$ denote the cycle graph on $n$ vertices. Then,

\begin{align*}
    \psdl{\ell}{C_n}&=\begin{cases}
       2 & \ell=0,1, \\
       n & \ell\geq 2.
    \end{cases}
\end{align*}
\end{thm}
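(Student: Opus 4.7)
The plan is to dispatch the three ranges of $\ell$ separately, using the infrastructure already set up in Sections~\ref{sec:Prelims} and \ref{sec:PropertiesLeakyForcing}. Throughout, label the vertices of $C_n$ as $v_1, v_2, \dots, v_n$ with edges $v_i v_{i+1}$ (indices mod $n$).

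For $\ell \geq 2$: since every vertex of $C_n$ has degree $2$, we have $\Delta(C_n) = 2 \leq \ell$, so Corollary~\ref{cor:mindegree} gives $\psdl{\ell}{C_n} = n$ immediately. This case is essentially free.

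For $\ell = 0$: the lower bound $\psdl{0}{C_n} \geq 2$ follows because a single blue vertex in $C_n$ has two non-blue neighbors, so the psd color change rule never fires; hence no singleton is a psd forcing set. For the upper bound, take $B^+ = \{v_1, v_2\}$. Then $G - B^+$ is the single path $v_3 v_4 \cdots v_n$, so the psd rule coincides with the standard rule on $G[W_1 \cup B^+]$. Now $v_2$ has the unique non-blue neighbor $v_3$ and $v_1$ has the unique non-blue neighbor $v_n$, so $v_2 \to v_3$ and $v_1 \to v_n$ are valid forces, and the process continues inward from both ends until the entire cycle is blue.

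For $\ell = 1$: the lower bound $\psdl{1}{C_n} \geq \psdl{0}{C_n} = 2$ comes for free from Theorem~\ref{thm:Inequalities}. For the upper bound, I again use $B = \{v_1, v_2\}$ and verify the criterion of Theorem~\ref{thm:1leakySets}, namely that every $v_k \in V(C_n) \setminus B$ admits two distinct psd forces in $\mathcal{F}^+(B)$. Consider two forcing chronologies: the clockwise one $v_2 \to v_3 \to v_4 \to \cdots \to v_n$ and the counterclockwise one $v_1 \to v_n \to v_{n-1} \to \cdots \to v_3$. In the clockwise chronology, $v_k$ is forced by $v_{k-1}$, and in the counterclockwise chronology, $v_k$ is forced by $v_{k+1}$ (with the convention that $v_1$ forces $v_n$ and $v_2$ forces $v_3$). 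Since $v_{k-1} \neq v_{k+1}$ for $n \geq 4$, each non-blue vertex has two distinct forcers in $\mathcal{F}^+(B)$, and Theorem~\ref{thm:1leakySets} applies. (The degenerate small cases $n = 3$ and $n = 4$ can be checked directly, or folded into the general argument by noting $C_3 = K_3$ and $C_4 = K_{2,2}$.)

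I do not anticipate a real obstacle; the only subtle step is the $\ell = 1$ upper bound, where one must resist the temptation to use only a single forcing chronology. The key is that Theorem~\ref{thm:1leakySets} works with the union of all forcing chronologies through $\mathcal{F}^+(B)$, so the clockwise and counterclockwise propagations together supply the required two distinct forcers for every non-blue vertex.
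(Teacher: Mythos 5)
Your proof is correct and follows essentially the same route as the paper: Corollary~\ref{cor:mindegree} for $\ell\ge 2$, Theorem~\ref{thm:Inequalities} for the lower bound when $\ell=1$, and exhibiting a two-vertex set for the upper bound. The only difference is that where the paper asserts ``it can be seen that any set of two vertices will form a $1$-leaky positive semidefinite forcing set,'' you actually verify the claim for $\{v_1,v_2\}$ via the two-forcer criterion of Theorem~\ref{thm:1leakySets}, which is a legitimate (and more complete) way to fill in that step.
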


\begin{proof}
      Suppose $\ell=1$. It can be seen that any set of two vertices will form a $1$-leaky positive semidefinite forcing set, and thus $Z^+_{(1)}(C_n) = 2$ by Theorem \ref{thm:Inequalities}. For $\ell\geq 2$, we get $Z^+_{(\ell)}(C_n) = n $ by Corollary \ref{cor:mindegree}. 
\end{proof}

\begin{thm}
    Let $K_n$ denote the complete graph on $n$ vertices. Then,

\begin{align*}
    \psdl{\ell}{K_n}&=\begin{cases}
        n-1 & \ell\leq n-2, \\
        n & \ell\geq n-1.
    \end{cases}
\end{align*}

\end{thm}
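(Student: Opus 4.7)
The plan is to handle the two cases separately using the tools already established.

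For $\ell \geq n-1$: I would simply invoke Corollary \ref{cor:mindegree}. Since $K_n$ is regular of degree $n-1$, we have $\Delta(K_n) = n-1 \leq \ell$, and therefore $\psdl{\ell}{K_n} = n$. This case is immediate.

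For $\ell \leq n-2$: I will prove $\psdl{\ell}{K_n} = n-1$ by establishing matching bounds. For the lower bound, I would argue that in $K_n$ any forcing must already fail (even with zero leaks) whenever $|B^+| \leq n-2$. Indeed, if at least two vertices remain uncolored, then $G - B^+$ is itself a complete graph on $\geq 2$ vertices, hence a single connected component $W_1$. Every $u \in B^+$ is adjacent in $K_n$ to every vertex of $W_1$, so in $G[W_1 \cup B^+]$ each blue vertex has at least two non-blue neighbors. By Definition \ref{def:PSDforce}, no psd force can occur, so $B^+$ is not a positive semidefinite forcing set, let alone a leaky one. Combined with Theorem \ref{thm:Inequalities} (or directly), this gives $\psdl{\ell}{K_n} \geq n-1$.

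For the upper bound when $\ell \leq n-2$, I would exhibit an explicit $\ell$-leaky positive semidefinite forcing set: choose any $B^+ \subseteq V(K_n)$ with $|B^+| = n-1$, and let $w$ denote the unique non-blue vertex. Now place $\ell \leq n-2$ leaks arbitrarily. Since $|B^+| = n-1 > \ell$, at least one blue vertex $u \in B^+$ is not a leak. The only non-blue neighbor of $u$ in $G[\{w\} \cup B^+] = K_n$ is $w$, so $u \to w$ is a valid psd force regardless of where the leaks were placed. Hence $\psdl{\ell}{K_n} \leq n-1$.

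I do not anticipate a serious obstacle. The only subtle point is recognizing that in $K_n$ the psd color change rule coincides with the standard rule (because $G - B^+$ is always either empty or a single connected component), which is what forces the $n-2$ threshold on both the counting of leaks and the counting of blue vertices to match up cleanly.
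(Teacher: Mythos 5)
Your proof is correct and follows essentially the same route as the paper: Corollary \ref{cor:mindegree} for $\ell \geq n-1$, the observation that with $n-1$ blue vertices and at most $n-2$ leaks some non-leak blue vertex still has a unique non-blue neighbor for the upper bound, and Theorem \ref{thm:Inequalities} for the lower bound. The only difference is that you prove $Z_+(K_n) \geq n-1$ from scratch (via the single-component/two-non-blue-neighbors argument), whereas the paper takes the $\ell = 0$ value as known; your version is slightly more self-contained but not a different approach.
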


\begin{proof}
    
    Consider $\ell$ leaks for $\ell\leq n-2$ and a set $B^+\subset V(G)$ such that $B^+$ is a positive semidefinite forcing set of $K_n$. Since $|B^+| = n-1$ and all vertices $v\in V(K_n)$ have degree $n-1$, it is still possible to force the single uncolored vertex with at most $n-2$ leaks. Therefore, with Theorem \ref{thm:Inequalities}, we have $Z_{(\ell)}^+(K_n) = n-1$ for all $\ell \le n-2$. By Corollary \ref{cor:mindegree}, when $\ell\geq n-1$, we get $Z_{(\ell)}^+(K_n) = n$.
\end{proof}

\subsection{Wheel Graphs}

\begin{thm}
    Let $W_n$ denote the wheel graph on $n+1$ vertices. Then,
    \begin{align*}
    \psdl{\ell}{W_n}&=\begin{cases}
        3 & \ell=0,1, \\
        \lceil \frac{n}{2} \rceil +1 & \ell=2, \\
        n & 2<\ell<n, \\
        n+1 & \ell=n,n+1.
    \end{cases}
\end{align*}
\end{thm}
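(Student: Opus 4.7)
The argument splits across the four ranges of $\ell$, with the bulk of the work concentrated in $\ell = 2$. For $\ell \in \{0,1\}$, the value $Z_+(W_n) = 3$ is known (see \cite{IEPGZF}), so Theorem~\ref{thm:Inequalities} supplies $\psdl{1}{W_n} \geq 3$, and the upper bound follows by exhibiting the $3$-vertex set $B = \{h, v_0, v_2\}$ and verifying it is $1$-leaky via Theorem~\ref{thm:1leakySets} (each non-blue vertex admits at least two distinct forcers in $\mathcal{F}^+(B)$). For $2 < \ell < n$, every rim vertex has degree $3 \leq \ell$, so Theorem~\ref{thm:degree} forces every rim vertex into $B$; conversely, with $B$ equal to the entire rim, the hub is the sole non-blue vertex and, with fewer than $n$ leaks, at least one rim vertex remains non-leaked and has the hub as its only non-blue neighbor, completing the single required force. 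For $\ell \geq n$, Corollary~\ref{cor:mindegree} yields $|B| = n+1$ since $\Delta(W_n) = n$.

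The case $\ell = 2$ is the substantive one. For the upper bound I will take $B = \{h\} \cup S$ with $S$ a set of $\lceil n/2 \rceil$ rim vertices chosen alternately, so that every non-blue rim vertex has both of its rim neighbors in $S$ (for odd $n$ one adjacent pair inside $S$ is unavoidable but harmless). Each non-blue rim vertex $v$ is then an isolated singleton of $G - B$, and its three neighbors $h, v_{i-1}, v_{i+1}$ are all in $B$ and each individually forces $v$ in a single step; since only two of these three vertices can ever be leaks, at least one forcer survives every leak placement, so $B$ is a $2$-leaky positive semidefinite forcing set of size $\lceil n/2 \rceil + 1$.

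For the matching lower bound, let $B$ be any $2$-leaky positive semidefinite forcing set and split on whether $h \in B$. If $h \in B$, I claim no two consecutive rim vertices can both be non-blue: if a maximal non-blue run containing adjacent $v_i, v_{i+1}$ is bounded by blue rim endpoints $v_{i-1}, v_{i+k}$, placing leaks at these two endpoints disables the only vertices in $B$ that are adjacent to that component from outside, while the hub has at least two non-blue neighbors in the component and cannot force there --- contradicting the $2$-leaky hypothesis. Hence the non-blue rim is an independent set in $C_n$, giving $|B| \geq \lceil n/2 \rceil + 1$. If $h \notin B$, then $G - B$ is a single connected component via the hub, so no rim-to-rim force can occur until $h$ is forced; the only possible forcers of $h$ are the ``interior'' blue rim vertices (those with both rim neighbors in $B$), and Proposition~\ref{prop:EllDiffFForces} demands at least three such interiors to survive two leaks. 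Moreover, placing leaks at the two blue rim endpoints of any non-blue run of length $\geq 2$ strands that run after $h$ is colored, so every non-blue run must again have length one. A direct block count on the blue arcs of $C_n$ under these two constraints yields $|B| \geq \lceil n/2 \rceil + 1$.

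The most delicate step will be the $h \notin B$ subcase of the lower bound, which requires reconciling the interior-count constraint with the run-length-one constraint on the blue arcs of $C_n$. The key observation is that the smallest configuration satisfying both (a single blue arc of length at least five, or suitably sized multiple arcs) already costs at least $n-1$ blue rim vertices, which exceeds $\lceil n/2 \rceil + 1$ for $n \geq 4$; the small remaining cases $n = 3$ and the rare pigeonhole-tight values are handled by direct inspection.
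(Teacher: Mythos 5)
Your proposal is correct in all four regimes and, for $\ell=2$, is substantially more detailed than the paper's own argument. The paper handles $\ell=1$ by sandwiching: it cites $Z_{(1)}(W_n)=3$ from the leaky standard forcing literature and combines Proposition~\ref{prop:psdlleqleaky} with Theorem~\ref{thm:Inequalities}, whereas you build an explicit $3$-vertex set and certify it with Theorem~\ref{thm:1leakySets}; both are fine, yours is self-contained. The cases $2<\ell<n$ and $\ell\ge n$ match the paper. The real divergence is the lower bound for $\ell=2$: the paper only observes that deleting a vertex from the specific hub-plus-alternating-rim pattern leaves some vertex with fewer than three potential forcers, which shows that particular set is minimal but does not, strictly speaking, rule out a smaller set of a different shape. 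Your split on $h\in B$ versus $h\notin B$, with the leak placement at the two blue endpoints of a non-blue rim run and the observation that when $h\notin B$ no rim-to-rim force can precede the forcing of $h$ (so $h$ needs three ``interior'' blue rim forcers by Proposition~\ref{prop:EllDiffFForces}), closes this gap properly and is a genuine improvement in rigor.

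One quantitative slip in your final paragraph: in the $h\notin B$ subcase it is not true that the constraints force at least $n-1$ blue rim vertices. With $a$ blue arcs the interior count is $|B|-2a$ and, since every non-blue run has length one, $a=n-|B|$; the constraint $|B|-2(n-|B|)\ge 3$ gives only $|B|\ge \lceil (2n+3)/3\rceil$, which is attained by several arcs of length three separated by isolated non-blue vertices (e.g.\ $|B|=9$ for $n=12$, well below $n-1=11$). The claim of a ``single blue arc'' costing $n-1$ is correct only when there is exactly one arc. This does not damage the theorem, since $\lceil (2n+3)/3\rceil \ge \lceil n/2\rceil +1$ for all $n\ge 3$, so the $h\notin B$ branch still yields the required bound; just replace the $n-1$ assertion with the honest block count.
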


\begin{proof}

    Suppose $\ell=1$. Since $\psdl{0}{W_n}=3$ and the $1$-leaky standard forcing number is $\leaky{1}{W_n}=3$ \cite{DillmanKenter}, by Proposition \ref{prop:psdlleqleaky} and Theorem \ref{thm:Inequalities}, we have $\psdl{1}{W_n}=3$.
    
    When $2< \ell<n$, by Corollary \ref{cor:mindegree}, any positive semidefinite $\ell$-leaky forcing set must contain all outer vertices on the wheel, since they all have degree $3$. Furthermore, all outer vertices can be shown to be an $\ell$-leaky positive semidefinite leaky set. Therefore, $\psdl{\ell}{W_n} = n$. In the cases of $\ell=n,n+1$, again by Corollary \ref{cor:mindegree}, all vertices on the wheel must be included and $\psdl{\ell}{W_n} = n+1$.

    Finally, in the case of $\ell=2$, we note that as long as the center vertex is initially colored blue and the outer vertices are colored in an alternating pattern so that there are no two adjacent white vertices as seen in Figure \ref{fig:wheels}, the two leaks will be able to appear anywhere without blocking the complete forcing of the graph. So, $\psdl{2}{W_n}\le \lceil \frac{n}{2} \rceil +1$. Also, by Proposition \ref{prop:EllDiffFForces}, removing any vertex from this coloring pattern will result in a vertex having fewer than $3$ possible ways to force it and thus $\psdl{2}{W_n} = \lceil \frac{n}{2} \rceil +1$.
\end{proof}

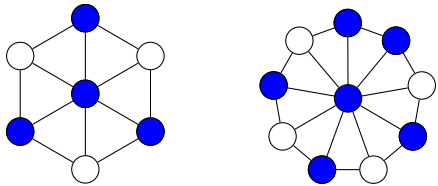
\begin{figure}[h!]
\begin{center}
        \begin{tikzpicture}
            [scale=.75, nodes={circle, draw}]
            \graph[clockwise, radius=1.0cm, empty nodes] {subgraph C_n [n=6,name=A] };
            \node at  ($(A 1)!.5!(A 4)$) (C){};
            \draw (A 1) -- (C);
            \draw (A 2) -- (C);
            \draw (A 3) -- (C);
            \draw (A 4) -- (C);
            \draw (A 5) -- (C);
            \draw (A 6) -- (C);
            \node[shape=circle, draw=black, fill=blue] at (A 1){};
            \node[shape=circle, draw=black, fill=blue] at (A 3){};
            \node[shape=circle, draw=black, fill=blue] at (A 5){};
            \node[shape=circle, draw=black, fill=blue] at (C){};
        \end{tikzpicture}
        \hspace{1cm}
        \begin{tikzpicture}
            [scale=0.75, nodes={circle, draw}]
            \graph[clockwise, radius=1.0cm, empty nodes] {subgraph C_n [n=9,name=A] };
            \node[shape=circle, draw=black, fill=blue] at  ($(A 1)!.5!(A 4)!.33!(A 7)$) (C){};
            \draw (A 1) -- (C);
            \draw (A 2) -- (C);
            \draw (A 3) -- (C);
            \draw (A 4) -- (C);
            \draw (A 5) -- (C);
            \draw (A 6) -- (C);
            \draw (A 7) -- (C);
            \draw (A 8) -- (C);
            \draw (A 9) -- (C);
            \node[shape=circle, draw=black, fill=blue] at (A 1){};
            \node[shape=circle, draw=black, fill=blue] at (A 2){};
            \node[shape=circle, draw=black, fill=blue] at (A 4){};
            \node[shape=circle, draw=black, fill=blue] at (A 6){};
            \node[shape=circle, draw=black, fill=blue] at (A 8){};
        \end{tikzpicture}
\end{center}
 \caption{Coloring pattern for wheel graphs with two leaks}
    \label{fig:wheels}
\end{figure}

\subsection{Complete Bipartite Graphs}
Let $K_{m,n}$ be the complete bipartite graph, with vertex set $\{v_1, v_2, ... , v_n, v_{n+1}, ... , v_{n+m}\}$ where the vertex subsets $V_1 = \{v_1, ... , v_n\}$ forms one independent set and $V_2 = V(G)\setminus V_1$ forms the second independent set of $K_{n,m}$.
\begin{thm}
For any $n,m\in\mathbb{Z}_+$, 

\begin{align*}
\label{psdlkmn}
    \psdl{\ell}{K_{m,n}}&=\begin{cases}
        \mathrm{min} (m,n) & \ell < \mathrm{min} (m,n),\\
        \mathrm{max} (m,n) & \mathrm{min}(m,n) \leq \ell < \mathrm{max}(m,n), \\
        m + n &  \mathrm{max}(m,n) \le \ell.
        \end{cases}
\end{align*}
\end{thm}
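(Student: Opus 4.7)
The plan is to assume without loss of generality that $n \le m$, so that $\min(m,n) = n$ and $\max(m,n) = m$, and to split on the size of $\ell$. Vertices in $V_1$ have degree $m$ while vertices in $V_2$ have degree $n$, so $\Delta(K_{m,n}) = m$; the case $\ell \ge m = \max(m,n)$ is then immediate from Corollary~\ref{cor:mindegree}, yielding $\psdl{\ell}{K_{m,n}} = m+n$.

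For the regime $\ell < n = \min(m,n)$, I would prove both bounds directly. The upper bound $\psdl{\ell}{K_{m,n}} \le n$ comes from taking $B^+ = V_1$: once $V_1$ is blue, every $w \in V_2$ is a singleton component of $G - V_1$, so in $G[\{w\} \cup V_1]$ each $v_1 \in V_1$ has $w$ as its unique non-blue neighbor and can perform the psd force $v_1 \to w$. Since $\ell < n = |V_1|$, no placement of leaks can neutralize every element of $V_1$, so at least one legal forcer survives for each $w$. The matching lower bound $\psdl{\ell}{K_{m,n}} \ge n$ follows from monotonicity (Theorem~\ref{thm:Inequalities}) together with the known value $\psdl{0}{K_{m,n}} = \min(m,n) = n$ cited in \cite{IEPGZF}.

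For the middle regime $n \le \ell < m$, the upper bound $\psdl{\ell}{K_{m,n}} \le m$ is symmetric to the previous case: taking $B^+ = V_2$ makes each $w \in V_1$ a singleton component with $|V_2| = m > \ell$ potential forcers, so a non-leaked $v_2 \in V_2$ always remains. The substantive step is the lower bound. Suppose for contradiction that $|B^+| < m$; I plan to exhibit an adversarial leak placement blocking the process. Because $V_1$ is independent and $|V_1| = n \le \ell$, the adversary can afford to designate \emph{every} vertex of $V_1$ as a leak. Now if $V_2 \not\subseteq B^+$, pick any $w \in V_2 \setminus B^+$; every neighbor of $w$ lies in $V_1$ and is therefore leaked, so no vertex can ever psd-force $w$. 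Hence $V_2 \subseteq B^+$, contradicting $|B^+| < m$ and giving the required bound $\psdl{\ell}{K_{m,n}} \ge m$.

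The main obstacle I anticipate is making the adversarial argument airtight in the middle case: one must ensure not only that the initially blue neighbors of $w$ are disabled by the leak assignment, but also that no sequence of forces introduces a useful non-leaked forcer for $w$ later on. The key observation closing this gap is that $N(w) \subseteq V_1$ for any $w \in V_2$, so every vertex that could ever perform a force into $w$ lies inside the leaked set $V_1$, regardless of what becomes blue along the way.
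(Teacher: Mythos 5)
Your proof is correct and takes essentially the same route as the paper's: color the smaller side $V_1$ for $\ell<\min(m,n)$ with the lower bound coming from monotonicity (Theorem~\ref{thm:Inequalities}) off the $\ell=0$ value, force $V_2\subseteq B^+$ via the degree condition in the middle regime, and invoke Corollary~\ref{cor:mindegree} when $\ell\ge\max(m,n)$. Your explicit adversarial placement of leaks on all of $V_1$ in the middle case is simply an inline re-derivation of Theorem~\ref{thm:degree}, which the paper cites directly for that lower bound.
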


\begin{proof}
     Assuming that $n\leq m$, when $\ell < \mathrm{min} (m,n)$, we are able to color the independent set $V_1$ and force the graph no matter where the leaks are placed. Therefore, using Theorem \ref{thm:Inequalities}, $\psdl{\ell}{K_{m,n}} = \min\{m,n\}$.
    
    When $\mathrm{min}(m,n) \leq \ell < \mathrm{max}(m,n)$, by Theorem \ref{thm:degree}, our initial blue set must contain the vertices in $V_2 = V(G)\setminus V_1$. Furthermore, this set can be seen to be a $\ell$-leaky positive semidefinite forcing set, and thus $\psdl{\ell}{K_{m,n}} = \mathrm{max}(m,n)$. Finally, when $\ell \geq \mathrm{max}(m,n)$, by Corollary \ref{cor:mindegree}, we must have $\psdl{\ell}{K_{m,n}} = m+n$.
\end{proof}


\subsection{Trees}

For trees, it is well known that $Z_+(T) = 1$ which is in contrast to standard forcing which gives $Z(T) = P(T)$, the path cover number of a graph \cite{IEPGZF}. With the addition of leaks, we have the following theorem, which, for all $\ell\ge 1$, is surprisingly the same as for the $\ell$-leaky standard zero forcing \cite{DillmanKenter,LeakyReslience}. 
\begin{thm}\label{thm:LeakyTrees}
    Let $T$ be a tree, and $B = \{v\in V(T): \text{deg}(v)\leq \ell\}$. Then,

\begin{align*}
    \psdl{\ell}{T}&=\begin{cases}
    1 & \ell = 0, \\
    |B| & \  \ell \geq 1.
    \end{cases}
\end{align*}
\end{thm}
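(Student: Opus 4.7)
The plan is to treat $\ell = 0$ by invoking the classical identity $Z_+(T) = 1$, and for $\ell \geq 1$ to sandwich $\psdl{\ell}{T}$ between $|B|$ from below (via Theorem \ref{thm:degree}) and $\leaky{\ell}{T}$ from above (via Proposition \ref{prop:psdlleqleaky}).

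For $\ell = 0$, the equality $\psdl{0}{T} = Z_+(T) = 1$ is classical. One concrete verification: color any single vertex $r$ blue; then $T - \{r\}$ decomposes into subtrees $W_1, \ldots, W_k$, and within each $T[W_i \cup \{r\}]$ the vertex $r$ has a unique non-blue neighbor (the child of $r$ in $W_i$) and so can psd-force it, after which one recurses on each resulting smaller rooted subtree until the whole tree is blue.

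For $\ell \geq 1$, Theorem \ref{thm:degree} immediately gives the lower bound $|B| \leq \psdl{\ell}{T}$, since every vertex of degree at most $\ell$ must lie in any $\ell$-leaky PSD forcing set. For the matching upper bound, Proposition \ref{prop:psdlleqleaky} gives $\psdl{\ell}{T} \leq \leaky{\ell}{T}$, and $\leaky{\ell}{T} = |\{v \in V(T) : \deg(v) \leq \ell\}| = |B|$ is the known $\ell$-leaky standard zero forcing number for trees, established in \cite{DillmanKenter, LeakyReslience}. Chaining yields $\psdl{\ell}{T} \leq |B|$, and combined with the lower bound, $\psdl{\ell}{T} = |B|$.

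The main subtlety is essentially referential: one must verify that the cited formula from \cite{DillmanKenter, LeakyReslience} appears in exactly the form needed here. A self-contained alternative would be to directly exhibit $B$ as an $\ell$-leaky PSD forcing set by constructing, for each component of $T - B$, a leaf-peeling schedule of psd-forces robust to any placement of the $\ell$ leaks; this works because every vertex outside $B$ has degree strictly greater than $\ell$, so each leaf of a component has enough blue neighbors in $B$ that at least one non-leak can perform the required force once the psd uniqueness condition is arranged. The sandwich approach sidesteps this combinatorial bookkeeping while cleanly exhibiting the coincidence with standard leaky forcing that the surrounding discussion emphasizes.
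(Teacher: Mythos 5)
Your proposal is correct and follows essentially the same route as the paper: the lower bound $|B|\le \psdl{\ell}{T}$ from Theorem \ref{thm:degree}, and the upper bound by chaining Proposition \ref{prop:psdlleqleaky} with the known value $\leaky{\ell}{T}=|B|$ from \cite{LeakyReslience}. The extra detail you give for $\ell=0$ and the sketched self-contained alternative are fine but not needed.
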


\begin{proof}

    Let $T$ be a tree, $\ell \ge 1$, and $B\subseteq V(T)$ be such that $B = \{v\in V(T): \text{deg}(v)\leq \ell\}$. By Theorem \ref{thm:degree}, we must have $|B| \le \psdl{\ell}{T}$. From \cite{LeakyReslience} and Proposition \ref{prop:psdlleqleaky}, we have $\psdl{\ell}{T} \le |B|$ and the result follows.


\end{proof}

\section{Cartesian products}\label{sec:CartesisnProducts}
For positive semidefinite zero forcing, we know $Z_+(G\Box G') \le \min\{n' Z_+(G), nZ_+(G')\} $ \cite{IEPGZF}. A similar result follows with the addition of $\ell$-leaks, since still taking the same vertices associated with an $\ell$-leaky forcing positive semidefinite set for $G$ will force $G\Box G'$ with $\ell$ leaks (similarly, for $G'$). 

\begin{thm}\label{thm:LeakyInequalityProduct}
    Let $G$ be a graph. Then, 
    $$Z_{(\ell)}^+(G\Box G') \le \min\{n' Z_{(\ell)}^+(G), nZ_{(\ell)}^+(G')\}. $$
\end{thm}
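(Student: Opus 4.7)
The plan is to prove the bound by a direct construction: starting from a minimum $\ell$-leaky positive semidefinite forcing set $B$ of $G$, I would show that the lifted set $\widetilde{B} := B \times V(G')$ is an $\ell$-leaky PSD forcing set of $G \Box G'$, which immediately gives $\psdl{\ell}{G \Box G'} \le n' \psdl{\ell}{G}$ since $|\widetilde{B}| = n' \psdl{\ell}{G}$; the symmetric argument with $V(G) \times B'$ produces the bound $n \psdl{\ell}{G'}$, and taking the minimum of the two finishes the proof.

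The key structural lemma I would establish is that whenever the blue set of $G \Box G'$ has the product form $S \times V(G')$ with $S \supseteq B$, the non-blue components of $G \Box G' - (S \times V(G'))$ are exactly the sets $W_i \times C_j$, where the $W_i$ are the components of $G - S$ and the $C_j$ are the components of $G'$. From this it follows that, in the induced subgraph $G \Box G'\bigl[(W_i \times C_j) \cup (S \times V(G'))\bigr]$, the non-blue neighbors of a blue vertex $(b, v')$ are exactly $(N_G(b) \cap W_i) \times \{v'\}$ when $v' \in C_j$; every $G'$-direction neighbor $(b, v''')$ of $(b, v')$ sits in $S \times V(G')$ and is blue, so it contributes nothing. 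Consequently, a valid PSD force $b \to w$ in $G$ relative to $S$ lifts, simultaneously in every copy, to a valid PSD force $(b, v') \to (w, v')$ in $G \Box G'$ relative to $S \times V(G')$.

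To handle an arbitrary placement of $\ell$ leaks in $G \Box G'$, I would project: let $L_{v'} \subseteq V(G)$ collect the leak vertices lying in the copy $G \times \{v'\}$, and set $L := \bigcup_{v' \in V(G')} L_{v'}$. Since $\sum_{v'} |L_{v'}| = \ell$, we have $|L| \le \ell$, so because $B$ is an $\ell$-leaky PSD forcing set of $G$, there is a set of PSD forces $\mathcal{F}$ of $B$ in $G$ that colors $G$ blue without ever using any vertex of $L$ as a forcer. Lifting $\mathcal{F}$ synchronously, that is, performing the analogue of each $G$-force in all copies $v' \in V(G')$ at once, produces a PSD forcing process of $G \Box G'$ from $\widetilde{B}$: by the structural lemma each lifted force is valid, induction on the chronological order shows the blue set remains in product form throughout, and no vertex $(u, v')$ with $u \in L$ is ever a forcer, so every leak in $G \Box G'$ (which necessarily sits above $L$) is avoided.

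The main obstacle I expect is precisely the preservation of the product-form invariant on the blue set: attempting to force each copy independently, using a separate leak-avoiding $G$-process per copy, would quickly produce an intermediate blue set that is not a product, and then non-blue components can merge across copies in ways that invalidate the copy-by-copy lift. The synchronization, tied to a single process $\mathcal{F}$ that avoids the union $L$, is what makes the invariant persist round by round; once it is in place, the structural lemma and the leak-avoidance are immediate, and the remainder of the verification is routine.
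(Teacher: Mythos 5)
Your proposal is correct and follows the same route the paper takes: the paper justifies the theorem with the one-line observation that lifting an $\ell$-leaky positive semidefinite forcing set of $G$ to $B\times V(G')$ still forces $G\Box G'$ in the presence of $\ell$ leaks, which is exactly your construction. Your write-up simply makes explicit the supporting details the paper leaves implicit (the product structure of the non-blue components, the synchronous lifting of a single leak-avoiding force sequence for the projected leak set $L$ with $|L|\le\ell$), and these details are all sound.
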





In the subsections that follow, we give the $\ell$-leaky positive semidefinite forcing number of families of graphs that can be realized as the Cartesian product of two graphs. 

\subsection{Grid graphs}
The grid graph $P_n\Box P_m$ can be viewed as points in an integer lattice in the first quadrant of the coordinate plane: $(i,j)$ such that $0\le i \le n-1$ and $0\le j \le m-1$. 

\begin{obs}
    Since all vertices in the grid graph have degree three or less, by by Corollary \ref{cor:mindegree}, we have $\psdl{\ell}{P_n\Box P_m} = n\cdot m$ for all $\ell\ge 3$. 
\end{obs}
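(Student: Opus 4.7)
The entire statement reduces to checking the hypothesis of Corollary~\ref{cor:mindegree}, which says that $\psdl{\ell}{G}=|V(G)|$ exactly when $\Delta(G)\le\ell$. So the plan is simply to bound the maximum degree of $P_n\Box P_m$ and then quote the corollary; there is essentially no independent combinatorial content in the observation beyond this.

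To bound $\Delta(P_n\Box P_m)$, I would recall that the neighbors of a vertex $(i,j)$ in the Cartesian product are precisely the lattice points $(i\pm 1,j)$ and $(i,j\pm 1)$ whose coordinates remain in $\{0,\dots,n-1\}\times\{0,\dots,m-1\}$. A routine case split shows that corners contribute degree $2$, non-corner boundary vertices contribute degree $3$, and any interior vertex (one with $1\le i\le n-2$ and $1\le j\le m-2$) contributes degree $4$. Consequently $\Delta(P_n\Box P_m)\le 4$, with the bound $\Delta(P_n\Box P_m)\le 3$ holding precisely in the thin-grid regime $\min(n,m)\le 2$.

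Putting the pieces together, as soon as $\ell\ge\Delta(P_n\Box P_m)$, Corollary~\ref{cor:mindegree} immediately gives $\psdl{\ell}{P_n\Box P_m}=nm$. The only care needed is matching the threshold on $\ell$ to the true maximum degree: the observation's ``$\ell\ge 3$'' is tight for thin grids, but for generic grids with $\min(n,m)\ge 3$ one must take $\ell\ge 4$ in order to invoke the corollary legitimately, since interior vertices have degree $4$ and could in principle still be forced with only three leaks. I would therefore either restrict the observation to the thin-grid case or update the threshold to $\ell\ge 4$ for the general statement.

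No genuine obstacle arises: the proof is a one-line application of Corollary~\ref{cor:mindegree} after the degree count. The only \emph{plausible} pitfall is cosmetic, namely the mismatch just noted between the advertised threshold and the actual maximum degree of the grid, which a careful reader should reconcile before citing the observation downstream.
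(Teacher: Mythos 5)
Your approach is exactly the paper's: the observation justifies itself by a degree count followed by an appeal to Corollary~\ref{cor:mindegree}, and that is all you do as well. The difference is that your degree count is the correct one, and it exposes a genuine flaw in the observation as printed. Whenever $n,m\ge 3$ the grid has interior vertices $(i,j)$ with $1\le i\le n-2$ and $1\le j\le m-2$, and these have degree $4$, so the premise that ``all vertices in the grid graph have degree three or less'' is false and $\Delta(P_n\Box P_m)=4$. Since Corollary~\ref{cor:mindegree} is stated as an equivalence, its ``only if'' direction then gives $\psdl{3}{P_n\Box P_m}<nm$ in that regime; concretely, in $P_3\Box P_3$ the eight boundary vertices form a $3$-leaky positive semidefinite forcing set, because the centre vertex has four neighbours and at most three of them can be leaks, so some neighbour always remains able to force it. Hence for $\ell=3$ and $\min(n,m)\ge 3$ the conclusion of the observation, not merely its justification, fails. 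Your proposed repairs are the right ones: either restrict the $\ell\ge 3$ threshold to thin grids with $\min(n,m)\le 2$, or state the result as $\psdl{\ell}{P_n\Box P_m}=nm$ for all $\ell\ge \Delta(P_n\Box P_m)$, which is $\ell\ge 4$ for generic grids.
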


Thus, the interesting cases are when $\ell = 1,2$. In \cite{DillmanKenter}, an upper bound for various $n,m$ are given on the $\ell$-leaky standard forcing number for $P_n\Box P_m$. For $\ell$-leaky positive semidefinite forcing, we are able to provide the following equality for particular $n$ and $m$ values. 
\begin{thm}
    For $n,m\in \mathbb{Z}_+$ such that $n\ge 4$ and $n=m$ or $n= m+1$, $\psdl{1}{P_n \Box P_m}= \min\{n,m\} = n.$
\end{thm}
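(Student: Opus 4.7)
The plan is to sandwich $\psdl{1}{P_n \Box P_m}$ between matching bounds equal to $\min\{n, m\}$; after relabeling we may assume $n \le m \le n+1$, so that $\min\{n, m\} = n$. For the lower bound, Theorem~\ref{thm:Inequalities} yields $\psdl{1}{P_n \Box P_m} \ge \psdl{0}{P_n \Box P_m} = Z_+(P_n \Box P_m)$, and the identity $Z_+(P_n \Box P_m) = \min\{n, m\}$ is a standard result for positive semidefinite zero forcing on grids: the upper bound $Z_+(P_n\Box P_m) \le \min\{n,m\}$ follows from Theorem~\ref{thm:LeakyInequalityProduct} applied with $\ell = 0$ (using $Z_+(P_k) = 1$), and the matching lower bound is well known (see \cite{IEPGZF}).

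For the upper bound, I would exhibit an explicit 1-leaky positive semidefinite forcing set of size $n$ by coloring a single row $B = \{(i, j^*) : 0 \le i \le n - 1\}$, where $j^* = 0$ suffices in the square case $n = m$ while an interior row such as $j^* = \lfloor m/2 \rfloor$ is needed in the case $m = n + 1$. The key feature of an interior row is that $G - B$ splits into two connected components (the strips above and below row $j^*$), so in Round 1 the positive semidefinite rule allows every non-leaked vertex of $B$ to force both its upward and downward neighbor simultaneously. Forcing then proceeds in two symmetric cascades toward rows $0$ and $m - 1$; a single leak produces at most one ``hole'' in either cascade, which widens row-by-row as a V of unforced vertices. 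Because $m \le n + 1$, each strip has height at most $\lceil n/2 \rceil + 1$, so the boundary columns at $i = 0$ and $i = n - 1$ (whichever lies outside the V) reach the far row before the V spans the full width. Positive semidefinite forcing along the far row and back inward through the unforced columns then closes the V and completes the forcing. A case analysis on the leak's location (inside $B$, in a row adjacent to $B$, or further away) verifies the remaining scenarios.

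The technical heart of the argument, and its main obstacle, is the round-by-round accounting needed in the upper bound, particularly for leaks adjacent to $B$ near the central column, where the V has maximal room to grow before the boundary columns reach the far row. The hypothesis $m \le n + 1$ is essential here: a direct computation on $P_4 \Box P_m$ for $m \ge 6$ shows that no single row of size $n$ is a 1-leaky positive semidefinite forcing set, because the V opens faster than any reach-around can close it. Formalizing the verification amounts to proving invariants such as ``the V at row $j^* + k$ occupies exactly $\{i : |i - i_0| < k\}$ within the current non-blue component,'' and then chasing these invariants to the far row, using at each step the connectedness of the non-blue component (so that the positive semidefinite color change rule reduces to standard forcing within that component).
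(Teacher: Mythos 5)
Your lower bound is exactly the paper's (monotonicity in $\ell$ plus $\psdl{0}{P_n\Box P_m}=\min\{n,m\}$), but your upper-bound certificate is different from the paper's and, as specified, it is wrong in the square case. Take $P_4\Box P_4$, $B=\{(0,0),(1,0),(2,0),(3,0)\}$ (your $j^*=0$), and place the leak at $(1,0)$. Round~1 forces $(0,1),(2,1),(3,1)$ but not $(1,1)$. The remaining white set $\{(1,1)\}\cup(\text{rows }2,3)$ is a single connected component, so the positive semidefinite rule gives nothing beyond standard forcing there: $(0,1)$ and $(2,1)$ each have two white neighbors, only column $i=3$ can advance, and after $(3,1)\to(3,2)$ the process stalls with eight white vertices. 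This is precisely the ``V opens faster than the reach-around can close it'' failure you describe for $P_4\Box P_m$, $m\ge 6$ --- it already occurs at $m=4$ when the blue row sits on the boundary, because the boundary row has only one adjacent strip and that strip has height $n-1$, far exceeding the $\lceil n/2\rceil$-ish budget your own heuristic requires. So an interior row is needed in \emph{both} cases, not just when $m=n+1$; and even then the round-by-round invariant chasing that you correctly identify as the technical heart is only sketched, so the upper bound is not actually established.

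For comparison, the paper's proof uses the main diagonal $B=\{(0,0),(1,1),\dots,(n-1,n-1)\}$ (which presumes $m\ge n$, consistent with the intended reading $\min\{n,m\}=n$). The diagonal splits $G-B$ into the two triangular components, each diagonal vertex borders at most two consecutive antidiagonal positions in each component, and a single leak creates at most one hole per round that is immediately refilled from the opposite side; this makes the one-leak robustness essentially local and symmetric, whereas a row concentrates all the redundancy in the horizontal direction and forces the global ``wrap around the far row'' argument you are struggling to control. If you want to keep the row construction, you must (i) replace $j^*=0$ by an interior row in the square case as well, and (ii) actually prove the V-invariant and the reach-around step for a worst-case central leak adjacent to $B$; alternatively, switch to the paper's diagonal, for which the verification is a short case check on the position of the single hole.
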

\begin{proof}
    Consider $n = m, m+1$ and the subset of vertices $B\subset V(P_n\Box P_m)$, $B =  \{(0,0), (1,1), ... , (n-1,n-1)\}$. This set can be seen to force the entire graph, despite one leak. Therefore, we have $\psdl{1}{P_n \Box P_m} \le n$, and since $\psdl{0}{P_n \Box P_m} = n$, using Theorem \ref{thm:Inequalities}, the result follows. 


\end{proof}


\subsection{Generalized Peterson Graph $G(n,1)$}

Continuing with the Cartesian product of graphs, the generalized Peterson graph, $G(n,1)$, can be realized as $C_n\square P_2$, and visualized as a three-dimensional prism. 

\begin{thm}
    For the generalized Peterson graph, $GP(n,1)$, we have 

\begin{align*}
    \psdl{\ell}{GP(n,1)}&=\begin{cases}
        3 & \ell = 0,1, \ n=3, \\
        4 & \ell =0,1 \ n\geq 4,\\
        4 & \ell = 2 \ n= 4,5,6, \\
        2n & \ell\geq 3. \\
    \end{cases}
\end{align*}
\end{thm}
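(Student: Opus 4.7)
The plan is to handle the four regimes in turn, reserving the main work for the middle one.

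The case $\ell \ge 3$ is immediate: every vertex of $GP(n,1) \cong C_n \Box P_2$ has degree exactly $3$, so $\Delta(GP(n,1)) = 3 \le \ell$ and Corollary \ref{cor:mindegree} gives $\psdl{\ell}{GP(n,1)} = 2n$.

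For $\ell \in \{0,1\}$ with $n = 3$, label the vertices $(i,j)$ with $i \in \{0,1,2\}$ and $j \in \{0,1\}$, and take $B = \{(0,0),(1,0),(2,0)\}$. The only component of $G - B$ is the top triangle $W = \{(0,1),(1,1),(2,1)\}$, and in $G[W \cup B]$ each $(i,0)$ has the unique non-blue neighbor $(i,1)$, so all three psd forces $(i,0) \to (i,1)$ fire in round one. A single leak kills at most one of these forces; the two remaining forces still color two top vertices blue, after which the standard rule finishes the last top vertex along the top triangle. For the lower bound I would check by direct case analysis that no two-vertex set works, showing that after removing any two vertices from $K_3 \Box P_2$ the remaining four-vertex subgraph is connected and each blue vertex still has at least two non-blue neighbors, so no initial psd force can fire.

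For $\ell \in \{0,1\}$ with $n \ge 4$, take $B = \{(0,0),(0,1),(1,0),(1,1)\}$, consisting of two adjacent rungs. Each of the four vertices has a unique non-blue neighbor in $G[W \cup B]$ where $W = V(G) \setminus B$, namely $(1,0) \to (2,0)$, $(1,1) \to (2,1)$, $(0,0) \to (n-1,0)$, and $(0,1) \to (n-1,1)$; the two blue fronts then sweep toward each other along both copies of $P_{n-2}$ until they meet, giving the upper bound of $4$ for $\ell = 0$. For $\ell = 1$, every non-blue vertex admits two distinct psd forcers in $\mathcal{F}^+(B)$ since the two fronts reach every interior position from opposite sides, so Theorem \ref{thm:1leakySets} gives the upper bound. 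The lower bound $\psdl{0}{C_n \Box P_2} \ge 4$ for $n \ge 4$ follows from a short case argument showing that no three-vertex set admits any initial psd force; monotonicity (Theorem \ref{thm:Inequalities}) then yields the bound for $\ell = 1$.

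The main obstacle is the case $\ell = 2$ with $n \in \{4,5,6\}$. Monotonicity gives $\psdl{2}{GP(n,1)} \ge \psdl{1}{GP(n,1)} = 4$ immediately. For the upper bound, Proposition \ref{prop:EllDiffFForces} requires every non-blue vertex to admit three distinct psd forcers, and since every vertex of the prism has degree exactly $3$ this forces all three neighbors of each non-blue vertex to serve as the forcer in some valid process. I would pick a four-vertex set consisting of two rungs chosen to split the cycle into two arcs of balanced length (opposite rungs for $n=4$, best-balanced pairs for $n=5,6$) and verify by exhaustive case analysis on the pairs of leak positions that each non-blue vertex retains all three forcers. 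The combinatorial bookkeeping here is the delicate part of the proof, and it also indicates why the result is restricted to $n \le 6$: once the arcs get long enough, an adversary placing the two leaks symmetrically on one arc can sever a mid-arc vertex from one of its three potential forcers.
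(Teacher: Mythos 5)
Your overall architecture matches the paper's: Corollary \ref{cor:mindegree} for $\ell\ge 3$, monotonicity (Theorem \ref{thm:Inequalities}) to transfer lower bounds upward in $\ell$, an explicit four-vertex set built from two rungs for the upper bounds, and a deferred verification for the $\ell=2$ cases (the paper uses rungs at cyclic distance at most two and simply asserts the set forces; you use balanced rungs, which differ only at $n=6$, and likewise defer to case analysis). The one structural difference is that the paper does not reprove the leakless values: it cites $\psdl{0}{GP(n,1)}=3,4$ from the literature and obtains the $\ell=1$ upper bound for $n\ge 4$ from the Cartesian-product bound $Z^+_{(1)}(C_n\Box P_2)\le 2\,Z^+_{(1)}(C_n)=4$ rather than from an explicit sweep. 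Your explicit constructions for $n=3$ and for the $\ell=0,1$ upper bounds at $n\ge4$ are correct.

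There is, however, one genuinely broken step: your lower bound for $n\ge 4$, $\ell=0$. You propose to show that no three-vertex set admits \emph{any} initial psd force, but this is false. Take $B=\{(0,0),(1,0),(0,1)\}$ in $C_n\Box P_2$: the vertex $(0,0)$ has two blue neighbors and hence the unique non-blue neighbor $(n-1,0)$, so the force $(0,0)\to(n-1,0)$ fires immediately. The correct statement is only that no three-vertex set \emph{completes} the forcing, which is a substantially harder claim (the process can start and then stall, as it does in the example above), and your proposed argument does not reach it. The paper sidesteps this entirely by citing the known value $Z_+(GP(n,1))=4$ for $n\ge4$; you would need either that citation or a real exhaustion over 3-sets tracking the full closure, not just the first round. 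Your $n=3$ lower bound, by contrast, is fine: with only two blue vertices in the $3$-connected triangular prism, every blue vertex retains at least two non-blue neighbors in the single component of $G-B$, so no force ever fires.
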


\begin{proof}
    Notice, when $\ell = 0$, $\psdl{0}{GP(n,1)} = 3 , 4$ when $n = 3$, $n\ge 4$ respectively \cite{PosSemiGeneralizedPeterson}. When $\ell = 1$ and $n=3$, it can be seen that $\psdl{1}{GP(n,1)} = 3$ using vertices of the 3-cycle. Using Theorem \ref{thm:Inequalities} and Theorem \ref{thm:LeakyInequalityProduct}, we have $\psdl{1}{GP(n,1)} =  4$ for all $n\ge 4$. Since $\delta(GP(n,1)) = 3$, we must have $\psdl{\ell}{GP(n,1)} = 2n$ when $\ell\ge 3$ by Corollary \ref{cor:mindegree}.




   Therefore, the interesting and final case is when $\ell = 2$ and $n \geq 3$. When $n=4,5,6$, coloring two vertices on the outside cycle a distance of at most two apart and their neighbor on the inside cycle blue will be a forcing set. Thus, using Theorem \ref{thm:Inequalities}, we have $\psdl{2}{GP(n,1)} = 4$.

\end{proof}

\begin{figure}[h!]
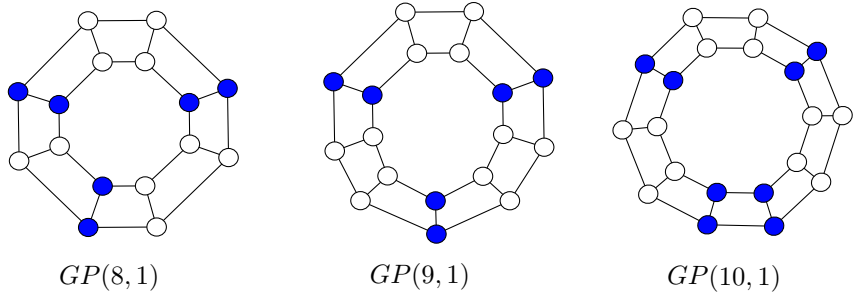

    \centering


    \caption{Coloring pattern for $GP(n,1)$ for two leaks}
    \label{fig:GPColoringPattern}
\end{figure}

When $\ell = 2$ and $n\ge 7$, we believe $\psdl{2}{GP(n,1)} = 2\displaystyle\left\lceil\frac{n}{3}\right\rceil$, although we were only able to provide the upper bound as in the following proposition. 

\begin{prop}
For $n\ge 7$, $\psdl{2}{GP(n,1)} \le 2\displaystyle\left\lceil\frac{n}{3}\right\rceil $.
\end{prop}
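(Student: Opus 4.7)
The plan is to construct an explicit $2$-leaky positive semidefinite forcing set of size $2\lceil n/3 \rceil$. Label the outer cycle of $GP(n,1)$ by $u_0, \ldots, u_{n-1}$ and the inner cycle by $w_0, \ldots, w_{n-1}$ (indices mod $n$), with rung edges $u_iw_i$, and set
$$B = \{u_{3k},\, w_{3k} : 0 \le k \le \lceil n/3 \rceil - 1\}.$$
Then $|B| = 2\lceil n/3 \rceil$, so it suffices to verify that $B$ is a $2$-leaky positive semidefinite forcing set.

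First I would describe the components of $G - B$. Each arc of length three between consecutive blue positions $3k$ and $3(k+1)$ gives a $4$-cycle component $S_k = \{u_{3k+1}, u_{3k+2}, w_{3k+1}, w_{3k+2}\}$; when $n \equiv 2 \pmod 3$ there is also one extra edge component $\{u_{n-1}, w_{n-1}\}$, and when $n \equiv 1 \pmod 3$ no extra component arises. Because the positive semidefinite color change rule acts on each component of $G - B^+$ independently, and a leak only affects forces in its closed neighborhood, it suffices to prove that each component can be fully forced for every placement of the two leaks that lie in that component or among its adjacent blue vertices.

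For a square $S_k$, the four external blue vertices $u_{3k}, u_{3(k+1)}, w_{3k}, w_{3(k+1)}$ each have a unique white neighbor inside $S_k$ in $G[S_k \cup B]$, so the four initial psd forces $u_{3k}\to u_{3k+1}$, $u_{3(k+1)}\to u_{3k+2}$, $w_{3k}\to w_{3k+1}$, and $w_{3(k+1)}\to w_{3k+2}$ are available. Once any vertex of $S_k$ becomes blue, the shrunken component of $G - B^+$ enables additional forces via the $4$-cycle edges and rung edges inside $S_k$. I would then run a case analysis on where the two leaks lie relative to $S_k$: zero, one, or two leaks near $S_k$, and in the last case splitting on whether the two leaks are both external (on the same external blue pair, on the same cycle, or diagonal), both internal, or mixed. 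In each subcase I would exhibit an explicit ordering of forces completing $S_k$, using the observation that even if two of the four initial external forces are blocked, the remaining two still plant blue vertices inside $S_k$, which in turn force the last two whites through the rung or cycle edges, as long as the ``internal'' forcers are not themselves leaked (guaranteed by the two-leak budget). The edge component $\{u_{n-1}, w_{n-1}\}$ is handled analogously but more easily: each of its two vertices has three potential forcers, namely two external blues plus the other endpoint of its rung.

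The main obstacle is the adversarial case where both leaks are near the same square $S_k$: the analysis must confirm that for every such configuration an explicit chain of psd forces still completes $S_k$. The key observation driving all these subcases is that the $4$-cycle structure of $S_k$ together with the two rung edges provides sufficient redundancy, so that after applying the non-blocked initial external forces, each remaining white vertex always has a non-leaked blue neighbor whose only non-blue neighbor in the current component-induced subgraph is itself, yielding a valid psd force. Once this case analysis is complete, the bound $\psdl{2}{GP(n,1)} \le 2\lceil n/3 \rceil$ follows.
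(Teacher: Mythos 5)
Your proposal is correct and uses exactly the construction in the paper: color every third vertex of the outer cycle together with its inner-cycle neighbor, giving $\lceil n/3\rceil$ blue vertices on each cycle. In fact your write-up is more careful than the paper's, which simply exhibits this coloring and asserts it is a $2$-leaky positive semidefinite forcing set, whereas you also identify the components of $G-B$ (the $4$-cycle squares and, when $n\equiv 2 \pmod 3$, one extra rung) and outline the leak-placement case analysis needed to verify the claim.
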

\begin{proof}
Let $n\ge 7$ and consider $\ell=2$ leaks on the graph $GP(n,1)$. We will color a subset of vertices as follows. On the outside cycle, color vertices blue ensuring as much as possible the maximum distance between any two blue vertices on the outside cycle is two. For each blue vertex now colored on the outside cycle, color the adjacent vertex on the inner cycle blue as well. See Figure \ref{fig:GPColoringPattern} for examples of this coloring pattern. Coloring in this way, we will color $\left\lceil\frac{n}{3}\right\rceil$ vertices on each cycle blue, and thus, $\psdl{2}{GP(n,1)} \le 2\displaystyle\left\lceil\frac{n}{3}\right\rceil$ as desired.  
\end{proof}

\subsection{$\mathbf{d}$-dimensional Hypercubes, $\mathbf{Q_d}$}

The $d$-dimensional hypercube $Q_d$ can be viewed as the Cartesian product $\underbrace{K_2\Box K_2 \Box \cdots \Box K_2}_{d}$. It can also be viewed as the graph with the set of vertices $V(Q_d)=\{0,1\}^d$ and edges $$E(Q_d)=\{(i,j):i,j\in V(Q_d) \text{ and } i, j \text{ differ by exactly one coordinate}\}.$$ See Figure \ref{fig:Hypercube} as an example of a $4$-dimensional hypercube. We note that for any $d$-dimensional hypercube $Q_d$, $|V(Q_d)|=2^d$ and the degree of all vertices in $Q_d$ is $d$.

\begin{figure}[h!]
    \centering
\begin{tikzpicture}[
    line width=0.6pt,
    every node/.style={circle, draw, thick, minimum size=6pt, inner sep=0pt, font=\scriptsize\bfseries}]

    \pgfsetxvec{\pgfpoint{0.9cm}{0.0cm}}
    \pgfsetyvec{\pgfpoint{0.0cm}{0.9cm}}

    \foreach \point / \id / \angle in {
        (0,0)/0001/270,
        (0,5)/0011/90,
        (5,0)/1001/270,
        (5,5)/1011/90,
        (2,2)/0101/180,
        (2,7)/0111/90,
        (7,2)/1101/270,
        (7,7)/1111/90,
        (2.5,1.5)/0000/270,
        (2.5,3.5)/0010/90,
        (4.5,1.5)/1000/250,
        (4.5,3.5)/1010/120,
        (3.5,2.5)/0100/170,
        (3.5,4.5)/0110/180,
        (5.5,2.5)/1100/10,
        (5.5,4.5)/1110/0}
    {
        \node (\id) at \point [label=\angle:\id] {};
    }


     \foreach \id in {0011, 1111, 0101, 1001, 1010, 0110, 0000, 1100}
     {
         \node[fill, blue] at (\id){};
     }


     \foreach \id in {0111, 1011, 0001, 1101, 1000, 1110, 0010, 0100}
     {
        \node[thick] at (\id){};
     }

    \path 
    (0011) edge (1011) edge (0111) edge (0001)
    (1001) edge (0001) edge (1101) edge (1011)
    (1111) edge (1101) edge (1011) edge (0111)
    (0010) edge (1010) edge (0110) edge (0000)
    (1000) edge (0000) edge (1100) edge (1010)
    (1110) edge (1100) edge (1010) edge (0110);

    \path[dashed]
    (0101) edge (1101) edge (0001) edge (0111)
    (0100) edge (1100) edge (0000) edge (0110);
    
    \path[dotted]
    (0000) edge (0001)
    (0010) edge (0011)
    (0100) edge (0101)
    (0110) edge (0111)
    (1000) edge (1001)
    (1010) edge (1011)
    (1100) edge (1101)
    (1110) edge (1111);
\end{tikzpicture}
    \caption{$4$-dimensional hypercube $Q_4$, with $\mathcal{E}$ colored blue.}
    \label{fig:Hypercube}
\end{figure}
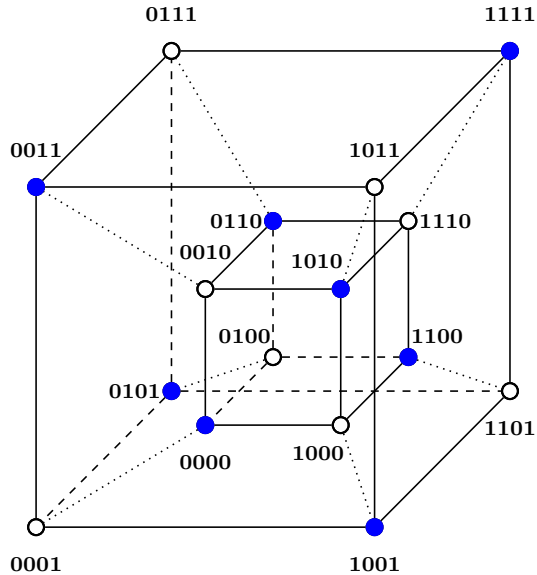



\begin{thm}
    For the $d$-dimensional hypercube $Q_d$,
\begin{align*}
    \psdl{\ell}{Q_d}&=\begin{cases}
        2^{d-1} & \ell\leq d-1, \\
        2^d & d\leq\ell\leq 2^d.
    \end{cases}
\end{align*}
\end{thm}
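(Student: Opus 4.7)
The plan is to split into two regimes according to $\ell$ and, for the nontrivial case, establish matching upper and lower bounds using the bipartite structure of $Q_d$.

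First, for $\ell\ge d$, every vertex of $Q_d$ has degree exactly $d$, so $\Delta(Q_d)=d\le \ell$, and Corollary \ref{cor:mindegree} immediately gives $\psdl{\ell}{Q_d}=2^d$. This handles the upper case with essentially no work.

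For $\ell\le d-1$, for the upper bound I would use the set $\mathcal{E}\subseteq V(Q_d)$ consisting of all binary strings with an even number of $1$'s, as illustrated in Figure \ref{fig:Hypercube}. Since $Q_d$ is bipartite with bipartition $(\mathcal{E},\,V(Q_d)\setminus\mathcal{E})$, we have $|\mathcal{E}|=2^{d-1}$, and every vertex $w\in V(Q_d)\setminus \mathcal{E}$ has all $d$ of its neighbors inside $\mathcal{E}$. Coloring $\mathcal{E}$ blue therefore isolates each such $w$ as its own component of $Q_d-\mathcal{E}$, so by Definition \ref{def:PSDforce}, every blue neighbor $u$ of $w$ sees $w$ as its unique non-blue neighbor in $G[\{w\}\cup\mathcal{E}]$ and can perform the psd force $u\to w$. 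Since $w$ has $d$ blue neighbors and only $\ell\le d-1$ leaks appear anywhere in the graph, at least one of these $d$ neighbors is non-leak, so $w$ is forced in a single round regardless of leak placement. Hence $\mathcal{E}$ is an $\ell$-leaky positive semidefinite forcing set, giving $\psdl{\ell}{Q_d}\le 2^{d-1}$.

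For the lower bound, by Theorem \ref{thm:Inequalities} it suffices to observe that $\psdl{\ell}{Q_d}\ge \psdl{0}{Q_d}=Z_+(Q_d)=2^{d-1}$, the latter being a known result for hypercubes (see \cite{IEPGZF} and references therein). Combining the two bounds yields $\psdl{\ell}{Q_d}=2^{d-1}$ in this regime.

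The main obstacle is really just the lower bound: it relies on the known value of $Z_+(Q_d)$ rather than a self-contained argument. An alternative self-contained route, should one prefer, would be to exhibit a positive semidefinite fort of size at least $2^{d-1}$ using the bipartite structure, or to argue by induction on $d$ using Theorem \ref{thm:LeakyInequalityProduct} with $Q_d=Q_{d-1}\Box K_2$; however, citing the standard hypercube value is both cleaner and consistent with how earlier families in Section \ref{sec:GraphFamilies} are handled. The only delicate point in the upper bound argument is making sure that leaks do not defeat the single-round force of some $w$, which is resolved by the pigeonhole observation $\ell<d=|N_{Q_d}(w)|$.
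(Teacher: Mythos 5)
Your proof is correct and follows essentially the same route as the paper: the bipartition class $\mathcal{E}$ as the forcing set (with each vertex of $\mathcal{O}$ isolated into its own component and saved by the pigeonhole $\ell < d$), monotonicity in $\ell$ together with $Z_+(Q_d)=2^{d-1}$ for the lower bound, and the minimum-degree obstruction for $\ell \ge d$. The only difference is cosmetic: the paper treats $\ell \le d-2$ via the known $(d-2)$-leaky standard forcing number of the cube before giving the $\mathcal{E}$ argument at $\ell = d-1$, whereas you handle all $\ell \le d-1$ uniformly with the $\mathcal{E}$ argument, which is a harmless streamlining.
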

\begin{proof}
     From \cite{Hypercubes}, it is known that $d$-dimensional hypercubes $Q_d$ are bipartite graphs, with two parts $\mathcal{E}$ and $\mathcal{O}$ corresponding to the vertices with an even number of 1's and odd number of 1's, respectively. Additionally, note that $|\mathcal{E}|=|\mathcal{O}|=2^{d-1}=\psdl{0}{Q_d}$. 

     Considering $\ell \le d-2$, from\cite{D-2LeakyCube}, Proposition \ref{prop:psdlleqleaky}, and Theorem \ref{thm:Inequalities}, we have that $\psdl{\ell}{Q_d}=2^{d-1}$ for all $\ell\leq d-2$. In the case of $\ell=d-1$, let the initial set of blue vertices $B$ be equal to the set $\mathcal{E}$ (see Figure \ref{fig:Hypercube} colored in this manner). Since $Q_d$ is bipartite, every non-blue vertex will have only blue neighbors, and every non-blue vertex would be isolated into its own component $W_i$ of $Q_d \setminus B$ for all $i\in[2^{d-1}]$. In each $W_i$, since the non-blue vertex has degree $d$, we are able to force. Thus, with this and Theorem \ref{thm:Inequalities}, we have $\psdl{\ell}{Q_d}=2^{d-1}$ for all $\ell\leq d-1$. 
    

    In the case of $d\leq \ell\leq 2^d$, we refer again to the fact that all vertices in $Q_d$ have degree $d$. By Lemma \ref{thm:degree}, all vertices in $Q_d$ must thus be blue in order to force against $\ell$ leaks.
\end{proof}

    

\section{Positive semidefinite leaky forts}\label{sec:Forts}

\quad \quad Noticing similarities of many results between $\ell$-leaky standard and $\ell$-leaky positive semidefinite forcing, most notably Theorem \ref{thm:LeakyTrees} giving the equality of leaky forcing numbers for trees, we investigated forts and their interaction with leaks. Forts for standard forcing and positive semidefinite forcing were studied in \cite{FortsOG,FortsPSD}, respectively, with $\ell$-leaky standard forts first introduced in \cite{DillmanKenter} and used as a computational tool to calculate the $\ell$-leaky forcing number of a graph. We extend the notion of a positive semidefinite fort to $\ell$-leaky positive semidefinite forts with the purpose of viewing leaky forcing through a different lens in hopes of unveiling differences between the two types of leaky forcing.

\begin{defn}
\label{psdlfort}
    An \emph{$\ell$-leaky positive semidefinite fort} in a graph $G$ is a subset of vertices, $\psdlfort\subset V(G)$, such that, for each connected component $F_i^+$ in $G[F_{(\ell)}^+]$, and for each $v\not\in \psdlfort$, either $\text{deg}_{F_i^+}(v)=0$, $\text{deg}_{F_{i}^+}(v)\geq 2$, or $|\{ v\in V(G)\setminus\psdlfort : |N_G(v)\cap F_i^+|=1 \}|\leq \ell$.
\end{defn}

\begin{rem}
    It can be see that our definition of $\ell$-leaky positive semidefinite forts captures the notion of a positive semidefinite fort taking $\ell = 0$. See Figure \ref{fig:LeakyPSDForts} for an image of a general $\ell$-leaky positive semidefinite (psd) fort and an example of a $\ell$-leaky psd fort for the graph $GP(8,1)$ and 2 leaks.  
\end{rem}

\begin{figure}[h!]
    \centering
\begin{tikzpicture}[x=0.75pt,y=0.75pt,yscale=-1,xscale=1]

\draw    (138,109.83) -- (182.58,94.08) ;
\draw    (139,98.83) -- (182.58,94.08) ;
\draw   (52,149) .. controls (52,108.68) and (101.02,76) .. (161.5,76) .. controls (221.98,76) and (271,108.68) .. (271,149) .. controls (271,189.32) and (221.98,222) .. (161.5,222) .. controls (101.02,222) and (52,189.32) .. (52,149) -- cycle ;
\draw    (160,171) .. controls (185,192) and (139,198) .. (164,222) ;
\draw    (157,123) .. controls (182,144) and (135,147) .. (160,171) ;
\draw    (157.67,76.33) .. controls (182.67,97.33) and (132,99) .. (157,123) ;
\draw   (81.81,145.44) .. controls (77.91,137.01) and (82.15,126.29) .. (91.28,121.51) .. controls (100.41,116.73) and (110.98,119.68) .. (114.88,128.11) .. controls (118.78,136.55) and (114.54,147.26) .. (105.41,152.04) .. controls (96.28,156.83) and (85.71,153.87) .. (81.81,145.44) -- cycle ;
\draw   (110.48,112.77) .. controls (106.58,104.34) and (110.82,93.63) .. (119.95,88.84) .. controls (129.08,84.06) and (139.65,87.02) .. (143.55,95.45) .. controls (147.45,103.88) and (143.21,114.59) .. (134.08,119.38) .. controls (124.95,124.16) and (114.38,121.21) .. (110.48,112.77) -- cycle ;
\draw   (115.98,206.61) .. controls (112.08,198.18) and (116.32,187.46) .. (125.45,182.68) .. controls (134.58,177.89) and (145.15,180.85) .. (149.05,189.28) .. controls (152.95,197.71) and (148.71,208.43) .. (139.58,213.21) .. controls (130.45,218) and (119.88,215.04) .. (115.98,206.61) -- cycle ;
\draw  [fill={rgb, 255:red, 255; green, 255; blue, 255 }  ,fill opacity=1 ] (179.33,94.08) .. controls (179.33,92.29) and (180.79,90.83) .. (182.58,90.83) .. controls (184.38,90.83) and (185.83,92.29) .. (185.83,94.08) .. controls (185.83,95.88) and (184.38,97.33) .. (182.58,97.33) .. controls (180.79,97.33) and (179.33,95.88) .. (179.33,94.08) -- cycle ;
\draw    (248.72,130.84) -- (232.02,115.36) ;
\draw    (228.35,137.46) -- (232.02,115.36) ;
\draw  [fill={rgb, 255:red, 255; green, 255; blue, 255 }  ,fill opacity=1 ] (231.68,118.59) .. controls (229.9,118.4) and (228.6,116.8) .. (228.79,115.02) .. controls (228.98,113.23) and (230.58,111.94) .. (232.36,112.12) .. controls (234.15,112.31) and (235.44,113.91) .. (235.25,115.69) .. controls (235.07,117.48) and (233.47,118.78) .. (231.68,118.59) -- cycle ;
\draw    (113.33,130) -- (192.92,129.58) ;
\draw  [fill={rgb, 255:red, 255; green, 255; blue, 255 }  ,fill opacity=1 ] (189.67,129.58) .. controls (189.67,127.79) and (191.12,126.33) .. (192.92,126.33) .. controls (194.71,126.33) and (196.17,127.79) .. (196.17,129.58) .. controls (196.17,131.38) and (194.71,132.83) .. (192.92,132.83) .. controls (191.12,132.83) and (189.67,131.38) .. (189.67,129.58) -- cycle ;
\draw    (107.5,145) -- (192.92,145.25) ;
\draw  [fill={rgb, 255:red, 255; green, 255; blue, 255 }  ,fill opacity=1 ] (189.67,145.25) .. controls (189.67,143.46) and (191.12,142) .. (192.92,142) .. controls (194.71,142) and (196.17,143.46) .. (196.17,145.25) .. controls (196.17,147.04) and (194.71,148.5) .. (192.92,148.5) .. controls (191.12,148.5) and (189.67,147.04) .. (189.67,145.25) -- cycle ;
\draw    (140.33,187) -- (193.25,186.58) ;
\draw  [fill={rgb, 255:red, 255; green, 255; blue, 255 }  ,fill opacity=1 ] (190,186.58) .. controls (190,184.79) and (191.46,183.33) .. (193.25,183.33) .. controls (195.04,183.33) and (196.5,184.79) .. (196.5,186.58) .. controls (196.5,188.38) and (195.04,189.83) .. (193.25,189.83) .. controls (191.46,189.83) and (190,188.38) .. (190,186.58) -- cycle ;
\draw    (376.17,159.24) -- (350.36,168.74) ;
\draw    (403.55,184.96) -- (394.4,211.52) ;
\draw    (430.66,184.78) -- (438,211.23) ;
\draw    (459.15,157.94) -- (483.8,166.57) ;
\draw    (458.73,131.18) -- (483.13,122.05) ;
\draw    (430.57,104.9) -- (437.85,78.34) ;
\draw    (403.46,105.07) -- (394.25,78.63) ;
\draw    (375.76,132.48) -- (349.7,124.23) ;
\draw    (350.36,168.74) -- (394.4,211.52) ;
\draw    (483.8,166.57) -- (438,211.23) ;
\draw    (437.85,78.34) -- (483.13,122.05) ;
\draw    (349.7,124.23) -- (394.25,78.63) ;
\draw    (394.25,78.63) -- (437.85,78.34) ;
\draw    (483.8,166.57) -- (483.13,122.05) ;
\draw    (350.36,168.74) -- (349.7,124.23) ;
\draw    (394.4,211.52) -- (438,211.23) ;
\draw    (376.17,159.24) -- (403.55,184.96) ;
\draw    (459.15,157.94) -- (430.66,184.78) ;
\draw    (430.57,104.9) -- (458.73,131.18) ;
\draw    (375.76,132.48) -- (403.46,105.07) ;
\draw    (403.46,105.07) -- (430.57,104.9) ;
\draw    (459.15,163.77) -- (458.73,131.18) ;
\draw    (376.17,165.08) -- (375.76,132.48) ;
\draw    (403.55,184.96) -- (430.66,184.78) ;
\draw  [color={rgb, 255:red, 0; green, 0; blue, 0 }  ,draw opacity=1 ][fill={rgb, 255:red, 208; green, 2; blue, 27 }  ,fill opacity=1 ] (369.55,132.48) .. controls (369.55,129.26) and (372.33,126.65) .. (375.76,126.65) .. controls (379.18,126.65) and (381.96,129.26) .. (381.96,132.48) .. controls (381.96,135.71) and (379.18,138.32) .. (375.76,138.32) .. controls (372.33,138.32) and (369.55,135.71) .. (369.55,132.48) -- cycle ;
\draw  [fill={rgb, 255:red, 255; green, 255; blue, 255 }  ,fill opacity=1 ] (424.37,104.9) .. controls (424.37,101.67) and (427.15,99.06) .. (430.57,99.06) .. controls (434,99.06) and (436.78,101.67) .. (436.78,104.9) .. controls (436.78,108.12) and (434,110.73) .. (430.57,110.73) .. controls (427.15,110.73) and (424.37,108.12) .. (424.37,104.9) -- cycle ;
\draw  [fill={rgb, 255:red, 255; green, 255; blue, 255 }  ,fill opacity=1 ] (397.26,105.07) .. controls (397.26,101.85) and (400.04,99.23) .. (403.46,99.23) .. controls (406.89,99.23) and (409.67,101.85) .. (409.67,105.07) .. controls (409.67,108.29) and (406.89,110.91) .. (403.46,110.91) .. controls (400.04,110.91) and (397.26,108.29) .. (397.26,105.07) -- cycle ;
\draw  [fill={rgb, 255:red, 208; green, 2; blue, 27 }  ,fill opacity=1 ] (370.47,159.24) .. controls (370.47,156.02) and (373.24,153.41) .. (376.67,153.41) .. controls (380.1,153.41) and (382.87,156.02) .. (382.87,159.24) .. controls (382.87,162.46) and (380.1,165.08) .. (376.67,165.08) .. controls (373.24,165.08) and (370.47,162.46) .. (370.47,159.24) -- cycle ;
\draw  [color={rgb, 255:red, 0; green, 0; blue, 0 }  ,draw opacity=1 ][fill={rgb, 255:red, 255; green, 255; blue, 255 }  ,fill opacity=1 ] (452.53,131.18) .. controls (452.53,127.95) and (455.31,125.34) .. (458.73,125.34) .. controls (462.16,125.34) and (464.94,127.95) .. (464.94,131.18) .. controls (464.94,134.4) and (462.16,137.01) .. (458.73,137.01) .. controls (455.31,137.01) and (452.53,134.4) .. (452.53,131.18) -- cycle ;
\draw  [fill={rgb, 255:red, 255; green, 255; blue, 255 }  ,fill opacity=1 ] (452.94,157.94) .. controls (452.94,154.71) and (455.72,152.1) .. (459.15,152.1) .. controls (462.57,152.1) and (465.35,154.71) .. (465.35,157.94) .. controls (465.35,161.16) and (462.57,163.77) .. (459.15,163.77) .. controls (455.72,163.77) and (452.94,161.16) .. (452.94,157.94) -- cycle ;
\draw  [fill={rgb, 255:red, 255; green, 255; blue, 255 }  ,fill opacity=1 ] (424.46,184.78) .. controls (424.46,181.56) and (427.24,178.95) .. (430.66,178.95) .. controls (434.09,178.95) and (436.87,181.56) .. (436.87,184.78) .. controls (436.87,188.01) and (434.09,190.62) .. (430.66,190.62) .. controls (427.24,190.62) and (424.46,188.01) .. (424.46,184.78) -- cycle ;
\draw  [fill={rgb, 255:red, 255; green, 255; blue, 255 }  ,fill opacity=1 ] (397.35,184.96) .. controls (397.35,181.73) and (400.13,179.12) .. (403.55,179.12) .. controls (406.98,179.12) and (409.76,181.73) .. (409.76,184.96) .. controls (409.76,188.18) and (406.98,190.79) .. (403.55,190.79) .. controls (400.13,190.79) and (397.35,188.18) .. (397.35,184.96) -- cycle ;
\draw  [fill={rgb, 255:red, 208; green, 2; blue, 27 }  ,fill opacity=1 ] (343.5,124.23) .. controls (343.5,121) and (346.28,118.39) .. (349.7,118.39) .. controls (353.13,118.39) and (355.91,121) .. (355.91,124.23) .. controls (355.91,127.45) and (353.13,130.06) .. (349.7,130.06) .. controls (346.28,130.06) and (343.5,127.45) .. (343.5,124.23) -- cycle ;
\draw  [fill={rgb, 255:red, 208; green, 2; blue, 27 }  ,fill opacity=1 ] (388.05,78.63) .. controls (388.05,75.4) and (390.83,72.79) .. (394.25,72.79) .. controls (397.68,72.79) and (400.46,75.4) .. (400.46,78.63) .. controls (400.46,81.85) and (397.68,84.46) .. (394.25,84.46) .. controls (390.83,84.46) and (388.05,81.85) .. (388.05,78.63) -- cycle ;
\draw  [fill={rgb, 255:red, 255; green, 255; blue, 255 }  ,fill opacity=1 ] (431.65,78.34) .. controls (431.65,75.11) and (434.42,72.5) .. (437.85,72.5) .. controls (441.27,72.5) and (444.05,75.11) .. (444.05,78.34) .. controls (444.05,81.56) and (441.27,84.17) .. (437.85,84.17) .. controls (434.42,84.17) and (431.65,81.56) .. (431.65,78.34) -- cycle ;
\draw  [fill={rgb, 255:red, 255; green, 255; blue, 255 }  ,fill opacity=1 ] (476.93,122.05) .. controls (476.93,118.83) and (479.71,116.22) .. (483.13,116.22) .. controls (486.56,116.22) and (489.34,118.83) .. (489.34,122.05) .. controls (489.34,125.28) and (486.56,127.89) .. (483.13,127.89) .. controls (479.71,127.89) and (476.93,125.28) .. (476.93,122.05) -- cycle ;
\draw  [fill={rgb, 255:red, 255; green, 255; blue, 255 }  ,fill opacity=1 ] (477.59,166.57) .. controls (477.59,163.35) and (480.37,160.73) .. (483.8,160.73) .. controls (487.22,160.73) and (490,163.35) .. (490,166.57) .. controls (490,169.79) and (487.22,172.4) .. (483.8,172.4) .. controls (480.37,172.4) and (477.59,169.79) .. (477.59,166.57) -- cycle ;
\draw  [fill={rgb, 255:red, 255; green, 255; blue, 255 }  ,fill opacity=1 ] (431.79,211.23) .. controls (431.79,208.01) and (434.57,205.39) .. (438,205.39) .. controls (441.42,205.39) and (444.2,208.01) .. (444.2,211.23) .. controls (444.2,214.45) and (441.42,217.06) .. (438,217.06) .. controls (434.57,217.06) and (431.79,214.45) .. (431.79,211.23) -- cycle ;
\draw  [fill={rgb, 255:red, 208; green, 2; blue, 27 }  ,fill opacity=1 ] (388.7,211.52) .. controls (388.7,208.29) and (391.47,205.68) .. (394.9,205.68) .. controls (398.33,205.68) and (401.1,208.29) .. (401.1,211.52) .. controls (401.1,214.74) and (398.33,217.35) .. (394.9,217.35) .. controls (391.47,217.35) and (388.7,214.74) .. (388.7,211.52) -- cycle ;
\draw  [fill={rgb, 255:red, 208; green, 2; blue, 27 }  ,fill opacity=1 ] (344.16,168.74) .. controls (344.16,165.52) and (346.94,162.9) .. (350.36,162.9) .. controls (353.79,162.9) and (356.57,165.52) .. (356.57,168.74) .. controls (356.57,171.96) and (353.79,174.58) .. (350.36,174.58) .. controls (346.94,174.58) and (344.16,171.96) .. (344.16,168.74) -- cycle ;
\draw   (203.5,129) .. controls (208,144.28) and (212.5,153.44) .. (217,156.5) .. controls (212.5,159.56) and (208,168.72) .. (203.5,184) ;

\draw (152.67,54.07) node [anchor=north west][inner sep=0.75pt]    {$G$};
\draw (66,72.07) node [anchor=north west][inner sep=0.75pt]    {$F_{( \ell )}^{+}$};
\draw (88.04,126.23) node [anchor=north west][inner sep=0.75pt]    {$F_{2}^{+}$};
\draw (101.74,163.44) node [anchor=north west][inner sep=0.75pt]    {$\vdots $};
\draw (116.71,93.56) node [anchor=north west][inner sep=0.75pt]    {$F_{1}^{+}$};
\draw (122.21,187.39) node [anchor=north west][inner sep=0.75pt]    {$F_{i}^{+}$};
\draw (186.24,155.28) node [anchor=north west][inner sep=0.75pt]    {$\vdots $};
\draw (219.67,148.23) node [anchor=north west][inner sep=0.75pt]    {$\ell $};
\draw (66.5,226.9) node [anchor=north west][inner sep=0.75pt]  [font=\small]  {$( a) \ General\ \ell -leaky\ psd\ fort$};
\draw (323,225.9) node [anchor=north west][inner sep=0.75pt]  [font=\small]  {$( b) \ Example\ 2-leaky\ psd\ fort$};

\end{tikzpicture}

    \caption{}
    \label{fig:LeakyPSDForts}
\end{figure}

Theorem \ref{thm:iffpsdlforts} connects $\ell$-leaky positive semidefinite  forts with $\ell$-leaky positive semidefinite forcing sets. First, we need the definition of the closure of an initial set of colored vertices when there are $\ell$ leaks present in the graph \cite{Closure}. 
\begin{defn}
    The \emph{closure} of a set $B$ (denoted by \textit{cl(B)}) when $\ell$ leaks appear in the graph after the set $B$ has been determined is defined as the set of colored vertices after exhaustively applying the color change rule. The closure of a set of vertices will always be the same vertices, regardless of the process in which the graph is colored as proven in \cite{Closure}.
\end{defn}

\begin{thm}
\label{thm:iffpsdlforts}
    Let $G$ be a graph and $\ell\ge 0$. A set $B_{(\ell)}^+\subseteq V(G)$ is a $\ell$-leaky positive semidefinite forcing set if and only if $B_{(\ell)}^+$ intersects all $\ell$-leaky positive semidefinite forts.
\end{thm}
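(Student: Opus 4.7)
The plan is to prove both directions by contrapositive, using a single component of a given fort to locate leaks in one direction, and the complement of the forcing closure as the candidate fort in the other.

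For the forward direction, suppose some $\ell$-leaky psd fort $F$ satisfies $B_{(\ell)}^+ \cap F = \emptyset$. I will pick any connected component $F_j^+$ of $G[F]$ and let
$L_0 = \{u \in V(G)\setminus F : |N_G(u)\cap F_j^+|=1\}$,
which by Definition \ref{psdlfort} satisfies $|L_0|\leq \ell$. Augment $L_0$ arbitrarily to a leak set $L$ of size exactly $\ell$ (extra leaks cannot help forcing). The key claim is that no vertex in $F_j^+$ is ever forced from $B_{(\ell)}^+$ under these leaks, which will contradict $B_{(\ell)}^+$ being an $\ell$-leaky psd forcing set. Suppose toward a contradiction that $v\in F_j^+$ is the first vertex of $F_j^+$ forced, say $u\to v$ with $u$ blue and not a leak. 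Just before this force, the blue set $B$ is disjoint from $F_j^+$, so $F_j^+$ lies inside the psd component $W$ containing $v$, and uniqueness of $v$ as $u$'s non-blue neighbor in $G[W\cup B]$ forces every other neighbor of $u$ in $F_j^+$ to be blue; since $F_j^+\subseteq W$ is non-blue, this means $u$ has exactly one neighbor in $F_j^+$. Either $u\notin F$, in which case $u\in L_0\subseteq L$ (contradiction), or $u\in F\setminus F_j^+$ because $F_j^+$ is non-blue, in which case $u$ lies in a different component of $G[F]$ than $v$ and therefore cannot be adjacent to $v$ (contradiction).

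For the backward direction, suppose $B_{(\ell)}^+$ is not an $\ell$-leaky psd forcing set; then there is a leak set $L$ with $|L|=\ell$ such that $B' := \operatorname{cl}(B_{(\ell)}^+; L)\subsetneq V(G)$. Set $F := V(G)\setminus B'$. Since $B_{(\ell)}^+\subseteq B'$, I immediately get $B_{(\ell)}^+\cap F=\emptyset$. The components of $G[F]$ coincide with the components of the white set $G-B'$, which are precisely the psd components $W_i$ available at the end of the forcing process. For any such component $F_i^+$ and any $u\in V(G)\setminus F=B'$ with $|N_G(u)\cap F_i^+|=1$, $u$ has a unique non-blue neighbor in $G[F_i^+\cup B']$ and would psd-force it unless it were a leak; since $B'$ is the closure, $u$ must lie in $L$. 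Hence $|\{u\in V(G)\setminus F : |N_G(u)\cap F_i^+|=1\}|\leq |L|=\ell$, verifying the fort condition from Definition \ref{psdlfort}.

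The main obstacle is the forward direction, specifically ensuring that leaking the bad vertices of a single component $F_j^+$ suffices even after vertices in other components $F_k^+$ of $G[F]$ have been turned blue; the structural fact that makes this work is that connected components of $G[F]$ have no edges between them, so any vertex already forced inside $F\setminus F_j^+$ is automatically harmless with respect to forcing into $F_j^+$. A secondary subtlety is that the definition budgets $\ell$ bad vertices per component of the fort rather than $\ell$ in total, but this is fine because we only need to block one component to prevent the whole graph from being forced.
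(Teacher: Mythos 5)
Your proof is correct, and in both directions it lands on the same two key objects the paper uses: the complement of the closure as the candidate fort, and the fort itself as the obstruction to forcing. The backward direction (not a forcing set $\Rightarrow$ a missed fort) is essentially identical to the paper's first paragraph, though you are more explicit that the vertices with a unique neighbor in a component $F_i^+$ must literally be leaks, which is why their number is bounded by $\ell$. The real difference is in the forward direction: the paper compresses the whole argument into the assertion that, since $F_{(\ell)}^+$ is a fort, $cl(V(G)\setminus F_{(\ell)}^+)\neq V(G)$, together with a monotonicity claim about closures; it never exhibits the leak placement that realizes this. You supply exactly that missing content --- fixing one component $F_j^+$, leaking the at most $\ell$ outside vertices with a unique neighbor into it, and showing that any first psd force $u\to v$ into $F_j^+$ would require $u$ to have exactly one neighbor in $F_j^+$ (hence be a leak) or to lie in a different component of $G[F]$ (hence be non-adjacent to $v$). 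Your closing observations --- that the per-component budget of $\ell$ is enough because blocking one component suffices, and that distinct components of $G[F]$ cannot force into each other --- are precisely the points the paper's one-line version glosses over, so your write-up is the more complete of the two.
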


\begin{proof}
    
    First, towards a contradiction, assume that there exists a set $B_{(\ell)}^+\subset V(G)$ that intersects all $\ell$-leaky positive semidefinite forts in $G$, but it is not a $\ell$-leaky positive semidefinite forcing set. Since $B^+_{(\ell)}$ is not a forcing set, once all possible forces are carried out, it follows that $cl(B_{(\ell)}^+)\not=V(G)$, which means that $V(G)\backslash cl(B_{(\ell)}^+) = F$ is nonempty and does not contain any blue vertices. Denote $F_i$ as the connected components of $F$. For $v\in cl(B_{(\ell)}^+)$, since no other forces can occur in $G$, we must have $\text{deg}_{F_i}(v) = 0$, $\text{deg}_{F_i}(v) = 2$, or $ |\{ u\in N_G(v) \cap F_i| = 1  \} | \le \ell$ for all components $F_i$. Therefore, by definition, $F$ would be a $\ell$-leaky  positive semidefinite fort that does not intersect $B_{(\ell)}^+$, a contradiction, and thus $B_{(\ell)}^+$ must be an $\ell$-leaky positive semidefinite forcing set.  
    
    Next, again towards a contradiction, let us assume that there exists a set $B_{(\ell)}^+\subset V(G)$ that is a $\ell$-leaky positive semidefinite forcing set, but does not intersect all $\ell$-leaky positive semidefinite forts. In other words, assume that there exists a $\ell$-leaky positive semidefinite fort $F_{(\ell)}^+\subset V(G)$ such that $B_{(\ell)}^+\cap F_{(\ell)}^+=\varnothing$. It follows that $B_{(\ell)}^+\subseteq V(G)\setminus F_{(\ell)}^+$, meaning $cl(B_{(\ell)}^+)\subseteq cl(V(G)\setminus F_{(\ell)}^+)$. Since $F_{(\ell)}^+$ is a fort, $cl(V(G)\backslash F_{(\ell)}^+)\neq V(G)$. Thus, $F_{(\ell)}^+$ would be excluded from the closure of the initial forcing set, which means that $B_{(\ell)}^+$ could not actually have been a $\ell$-leaky positive semidefinite forcing set. A contradiction and the result follows.
\end{proof}

With Theorem \ref{thm:iffpsdlforts}, we can now use $\ell$-leaky  positive semidefinite forts to prove results about $\ell$-leaky positive semidefinite sets. The results given in Section \ref{sec:PropertiesLeakyForcing} can be reframed and proven in terms of $\ell$-leaky positive semidefinite forts. For example, Theorem \ref{thm:degree} can be restated as a corollary of Theorem \ref{thm:iffpsdlforts}.

\begin{cor}
    For any graph $G$ and $\ell\ge 0$, any positive semidefinite $\ell$-leaky forcing set has at least the vertices of degree $\ell$ or less.
\end{cor}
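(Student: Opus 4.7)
The plan is to deduce the corollary directly from Theorem \ref{thm:iffpsdlforts} by exhibiting, for each low-degree vertex, a singleton $\ell$-leaky positive semidefinite fort containing it.

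First, I would fix an arbitrary vertex $v \in V(G)$ with $\deg_G(v) \le \ell$ and set $F^+_{(\ell)} := \{v\}$. The next step is a routine check against Definition \ref{psdlfort}. The induced subgraph $G[F^+_{(\ell)}]$ has the single connected component $F_1^+ = \{v\}$. For any $u \in V(G)\setminus F^+_{(\ell)}$, the degree $\deg_{F_1^+}(u)$ equals either $0$ (if $u \notin N_G(v)$) or $1$ (if $u \in N_G(v)$). The first two clauses of Definition \ref{psdlfort} handle the non-neighbors. To handle the neighbors, I count:
\[
\bigl|\{\, u \in V(G)\setminus F^+_{(\ell)} : |N_G(u) \cap F_1^+| = 1 \,\}\bigr| \;=\; |N_G(v)| \;=\; \deg_G(v) \;\le\; \ell,
\]
so the third clause is satisfied. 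Hence $\{v\}$ is an $\ell$-leaky positive semidefinite fort.

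Now I invoke Theorem \ref{thm:iffpsdlforts}: any $\ell$-leaky positive semidefinite forcing set $B^+_{(\ell)}$ must intersect every $\ell$-leaky positive semidefinite fort, and in particular must intersect $\{v\}$. Therefore $v \in B^+_{(\ell)}$. Since $v$ was an arbitrary vertex of degree at most $\ell$, the conclusion follows.

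There is no real obstacle here; the only mild subtlety is making sure the singleton fort condition is read correctly. In particular, the third clause of Definition \ref{psdlfort} is what makes the count $|N_G(v)| \le \ell$ the right bound, and this matches precisely the hypothesis $\deg_G(v) \le \ell$. The entire argument is a clean reformulation of Theorem \ref{thm:degree} through the fort lens, illustrating the utility of the fort characterization without requiring any new case analysis.
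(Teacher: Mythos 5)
Your proposal is correct and matches the paper's intent exactly: the paper presents this corollary as an immediate consequence of Theorem \ref{thm:iffpsdlforts} (omitting the details), and the intended argument is precisely your observation that each vertex of degree at most $\ell$ forms a singleton $\ell$-leaky positive semidefinite fort via the third clause of Definition \ref{psdlfort}, so every forcing set must contain it.
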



Our investigation of $\ell$-leaky positive semidefinite forts was motivated by the connections seen between $\ell$-leaky standard forcing and $\ell$-leaky positive semidefinite forcing. For many of the graph families we considered, we had $\leaky{\ell}{G}=\psdl{\ell}{G}$ for various $\ell$, with in some cases equality held for all $\ell$. Focusing on leaky forts, we investigated why this equality of leaky forcing numbers was the same for those graphs. We first made the following observation.

\begin{obs}
     If a positive semidefinite $\ell$-leaky fort $F_{(\ell)}^+$ of $G$, denoted $G[F_{(\ell)}^+]$, is connected, then that positive semidefinite $\ell$-leaky fort $F_{(\ell)}^+$ of $G$ is also a $\ell$-leaky standard fort of $G$. 
\end{obs}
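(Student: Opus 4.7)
The plan is to verify that when $G[F_{(\ell)}^+]$ is connected, the quantified conditions in Definition \ref{psdlfort} for an $\ell$-leaky positive semidefinite fort collapse precisely to the conditions defining an $\ell$-leaky standard fort. The argument is essentially a direct bookkeeping of definitions, with no substantive mathematical content beyond recognizing the specialization.

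First, I would recall the definition of an $\ell$-leaky standard fort from \cite{DillmanKenter}: a nonempty set $F \subseteq V(G)$ is an $\ell$-leaky standard fort provided that for every $v \in V(G) \setminus F$, either $\deg_F(v) = 0$, $\deg_F(v) \geq 2$, or the total number of vertices in $V(G) \setminus F$ with exactly one neighbor in $F$ is at most $\ell$. This is the statement obtained from the psd version by dropping the component-wise quantification.

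Next, I would invoke the hypothesis that $G[F_{(\ell)}^+]$ is connected. Under this hypothesis the family of connected components of $G[F_{(\ell)}^+]$ consists of a single component, namely $F_1^+ = F_{(\ell)}^+$ itself. Substituting this single choice into Definition \ref{psdlfort}, the condition on each $v \notin F_{(\ell)}^+$ becomes: either $\deg_{F_{(\ell)}^+}(v) = 0$, $\deg_{F_{(\ell)}^+}(v) \geq 2$, or $|\{u \in V(G) \setminus F_{(\ell)}^+ : |N_G(u) \cap F_{(\ell)}^+| = 1\}| \leq \ell$. This is verbatim the defining property of an $\ell$-leaky standard fort, which completes the verification.

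The only subtlety, and hence the main (mild) obstacle, is to make sure the definition of $\ell$-leaky standard fort used in the literature aligns exactly with what comes out of Definition \ref{psdlfort} when the number of components equals one. I would make this comparison explicit by writing both formulations side by side and pointing out that the component index $i$ in Definition \ref{psdlfort} ranges over the singleton $\{1\}$, so that no hidden quantifier mismatch is introduced. Because the implication in the other direction need not hold when $G[F_{(\ell)}^+]$ has multiple components (the psd definition imposes the condition on each component separately, which is strictly stronger than imposing it on the union), I would also note briefly that connectivity is essential to the observation.
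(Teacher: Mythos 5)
Your proof is correct and is exactly the definitional unwinding the paper leaves implicit (the Observation is stated without proof): when $G[F_{(\ell)}^+]$ is connected the component index in Definition~\ref{psdlfort} ranges over a singleton and the condition becomes verbatim the $\ell$-leaky standard fort condition. One small quibble with your closing side remark: the parenthetical claim that the componentwise condition is ``strictly stronger'' than the union condition would, if true, make connectivity unnecessary for the forward implication; in fact the two conditions are incomparable in general (a vertex may have exactly one neighbor in each of two components, so $\deg_{F_i^+}(v)=1$ while $\deg_{F_{(\ell)}^+}(v)=2$, and the sets counted in the third disjunct differ per component), which is the actual reason connectivity is needed.
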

With the previous observation in mind, we pose the following question. 
 \begin{question}
     Connected positive semidefinite $\ell$-leaky forts correspond with $\ell$-leaky standard forts. This is not enough to give $\leaky{\ell}{G}=\psdl{\ell}{G}$. What properties of the graph $G$ are sufficient or necessary for the equality of $\ell$-leaky forcing numbers? 
 \end{question}

\section*{Acknowledgements}

This work started at the Carnegie Mellon Summer Undergraduate Applied Mathematics Institute which took place in Summer 2023 with support from the
National Science Foundation under Grant No. 2244348. We also thank Dr. Michael Young, Dr. David Offner, and Dr. Juergen Kritschgau for their advice and support, along with the anonymous reviewers for their comments and suggestions.

\bibliography{references}

\end{document}